\newtheorem{theorem}{Theorem}[section]
\newtheorem{corollary}[theorem]{Corollary}
\newtheorem{lemma}[theorem]{Lemma}
\newtheorem{proposition}[theorem]{Proposition}
\newtheorem{example}[theorem]{Example}
\DeclareMathOperator{\depth}{depth}
\DeclareMathOperator{\Hom}{Hom}
\DeclareMathOperator{\Ann}{Ann}
\DeclareMathOperator{\docao}{ht}
\DeclareMathOperator{\Ass}{Ass}
\DeclareMathOperator{\Supp}{Supp}
\DeclareMathOperator{\Ext}{Ext}
\DeclareMathOperator{\Var}{Var}
\DeclareMathOperator{\psd}{psd}
\DeclareMathOperator{\Psupp}{Psupp}
\DeclareMathOperator{\nCM}{nCM}
\DeclareMathOperator{\Att}{Att}
\DeclareMathOperator{\Spec}{Spec}
\DeclareMathOperator{\p}{\frak p}
\DeclareMathOperator{\q}{\frak q}
\DeclareMathOperator{\m}{\frak m}
\DeclareMathOperator{\R}{\widehat R}
\DeclareMathOperator{\lr}{\longrightarrow}
\begin{document}
\large
\centerline{\Large {\bf  ON PSEUDO SUPPORTS AND NON COHEN-MACAULAY }}
\medskip

\centerline{\Large {\bf  LOCUS OF FINITELY GENERATED MODULES}}

\vskip 0.7cm
\centerline {NGUYEN TU CUONG}
\centerline{ Institute of Mathematics}
\centerline{18 Hoang Quoc Viet Road, 10307 Hanoi, Vietnam}
\centerline{ E-mail: ntcuong@math.ac.vn}
\vskip 0.2cm
\centerline { LE THANH NHAN}
\centerline{Thai Nguyen College of Sciences, Thai Nguyen, Vietnam}
\centerline{E-mail: trtrnhan@yahoo.com}
\vskip 0.2cm
\centerline { NGUYEN THI KIEU NGA}
\centerline{Hanoi Pedagogical University N$^0$2, Vinh Phuc, Vietnam}
\centerline{E-mail: kieungasp2@gmail.com}
\vskip 1cm

\noindent{\bf Abstract} {\footnote{ {\it{Key words and phrases: }} Pseudo supports, non Cohen-Macaulay locus, catenarity, Serre conditions, unmixedness. \hfill\break
  {\it{2000 Subject  Classification: }} 13D45, 13E05.\hfill\break {The authors are supported by the Vietnam National Foundation for Science and Technology Development (Nafosted).}}. { Let $(R,\m )$ be a Noetherian local ring  and $M$ a finitely generated $R$-module with $\dim M=d.$  Let  $i\geq 0$ be an integer. Following M. Brodmann and R. Y. Sharp \cite{BS1}, the $i$-th pseudo support of $M$  is the set of all prime ideals $\p$ of $R$ such that $ H^{i-\dim (R/\p)}_{\p R_{\p}}(M_{\p})\neq 0.$  In this paper, we study the pseudo supports  and the non Cohen-Macaulay locus of $M$ in connections with the catenarity of the ring $R/\Ann_RM$, the Serre conditions on $M$, and the unmixedness of the local rings $R/\p$ for certain prime ideals $\p$ in $\Supp_R (M)$.
 \section{Introduction}
 Throughout this paper, let $(R,\m )$ be a Noetherian local ring  and $M$ a finitely generated $R$-module with $\dim M=d.$  For each ideal $I$ of $R$, denote by $\Var (I)$ the set of all prime ideals containing $I$.  Let  $i\geq 0$ be an integer.  Following M. Brodmann and R. Y. Sharp [BS1], {\it the $i$-th pseudo support} of $M$, denoted by $\Psupp_R^iM$, is defined  by 
$$\Psupp_R^i(M)=\{\p\in\Spec (R) \mid  H^{i-\dim (R/\p)}_{\p R_{\p}}(M_{\p})\neq 0\}.$$
 Suppose that $R$ is a quotient of a $d'$-dimensional Gorenstein local  ring $(R', \m').$ Denote by $K^i_M$ the $R$-module $\Ext^{d'-i}_{R'}(M,R').$ Then $K^i_M$ is a finitely generated $R$-module and the local duality gives  an isomorphism $H^i_{\m}(M)\cong \Hom_R(K^i_M, E(R/\m))$, where $E(R/\m)$ is the injective hull of $R/\m,$  cf. \cite[11.2.6]{BS}.  This isomorphism  was used  to prove the closedness of  the non Cohen-Macaulay  locus $\nCM (M)$  of $M$ which is defined by 
$$\nCM (M)=\{\p\in\Spec (R)\mid M_{\p}\  \text{is not  Cohen-Macaulay}\},$$
 cf.  \cite[Corollary 6]{Sch1}.  Moreover we have $$\Psupp^i_R(M)=\Var (\Ann_R(H^i_{\m}(M)))=\Supp_R(K^i_M),$$ therefore $\Psupp^i_R(M)$  is a closed subset of $\Spec (R)$. Ones can use this fact in conjunction with the classical associativity formula for multiplicity of the finitely generated $R$-module $K^i_M$  to produce analogous associativity formula for multiplicity of $H^i_{\m}(M).$ In case $R$ is universally catenary and all its formal fibres are Cohen-Macaulay,  the associativity formula for multiplicity of $H^i_{\m}(M)$ is still valid and  $\Psupp^i_R(M)=\Var (\Ann_R(H^i_{\m}(M)))$, a closed subset of $\Spec (R)$ (cf. \cite[Theorem 2.4, Proposition 2.5]{BS1}). 

 The purpose of this paper is to study the pseudo supports and  the non Cohen-Macaulay locus of $M$ in connections with the catenarity of the ring $R/\Ann_RM$, the Serre conditions on $M$ and the unmixedness of the local rings $R/\p$ for certain prime ideals $\p$ in $\Supp_R(M)$.  The results in this paper show that, even without any assumption on $R$ and  even  $\Psupp^i_R(M)$ may not be closed,  the pseudo supports  of $M$ still give a lot of useful information on the module $M$ and the base ring $R$. 

The paper is divided into 4 sections. In the next section, we present some basic properties of pseudo supports of $M$ which will be used in the sequel.  In Section 3,  we provide a description of the depth and dimension of the localizations of $M$ and obtain a formula for the non Cohen-Macaulay locus  of $M$ (Theorem \ref{T:1a}). It follows that if all pseudo supports of $M$ are closed then so is the non Cohen-Macaulay locus of $M$ (Corollary \ref{C:1}).  Some further relations between the closedness of $\Psupp^i_R(M)$ and  that of $\nCM(M)$ are also given. In the last section, firstly we  show that many results already known under the assumption that $R$ is a quotient of a Gorenstein ring, can be extended to the case where $R/\Ann_RM$ is universally catenary and all its formal fibres are Cohen-Macaulay. Especially the non Cohen-Macaulay locus of $M$ is closed under this weaker hypothesis (Corollary \ref{C:6}). Then we present some connections with the universal catenarity of the ring $R/\Ann_RM$, the  Serre conditions on $M$  and the unmixedness of the rings $R/\p$ for certain $\p\in\Supp_R(M)$ (Theorems \ref{T:2}, \ref{T:3}).

\section {Preliminaries}  

Let $i\geq 0$ be an integer. Recall that {\it the $i$-th pseudo support} of $M$, denoted by $\Psupp_R^iM$, is defined  as follows, cf. \cite{BS1}
$$\Psupp_R^i(M)=\{\p\in\Spec (R) \mid  H^{i-\dim (R/\p)}_{\p R_{\p}}(M_{\p})\neq 0\}.$$
For  a subset $T$ of $\Spec (R)$,  we set $(T)_i=\{\p\in T\mid \dim (R/\p)=i\}.$
\begin{lemma}\label{L:1d} Let $i\geq 0$ be an integer.  The following statements are true

(i) $\dim (R/\p)\leqslant i$ for all $\p\in\Psupp^i_R(M)$. 

(ii) $\big(\Psupp^i_R(M)\big)_i=\big(\Ass_RM\big)_i.$
\end{lemma}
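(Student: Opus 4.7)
The plan is to unpack the definition of $\Psupp_R^i(M)$ in each case and reduce to elementary properties of local cohomology. Both parts follow by examining the cohomological degree $i-\dim(R/\p)$ and knowing when the corresponding local cohomology group can be nonzero.

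For (i), I would argue as follows. If $\p\in\Psupp^i_R(M)$, then by definition $H^{i-\dim(R/\p)}_{\p R_\p}(M_\p)\neq 0$. Since local cohomology $H^n_{\p R_\p}(\cdot)$ vanishes for $n<0$, the integer $i-\dim(R/\p)$ must be nonnegative, giving $\dim(R/\p)\leq i$. This is essentially a one-line observation; the only point to flag is that we are using the trivial vanishing of negative-index local cohomology, not Grothendieck's upper vanishing bound.

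For (ii), the key identification is that for $\p\in\Spec(R)$ with $\dim(R/\p)=i$, the condition $\p\in\Psupp^i_R(M)$ becomes $H^0_{\p R_\p}(M_\p)\neq 0$. Now $H^0_{\p R_\p}(M_\p)$ equals the $\p R_\p$-torsion submodule $\Gamma_{\p R_\p}(M_\p)$, which is nonzero exactly when the maximal ideal $\p R_\p$ belongs to $\Ass_{R_\p}(M_\p)$. By the standard behaviour of associated primes under localization, this is equivalent to $\p\in\Ass_R(M)$. Chaining these equivalences while keeping the restriction $\dim(R/\p)=i$ gives $\big(\Psupp^i_R(M)\big)_i=\big(\Ass_RM\big)_i$.

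There is no real obstacle here: both statements are direct consequences of the definition once one recognizes the cohomological degree $0$ case as plain torsion. The only minor care needed is to avoid conflating the two vanishing theorems for local cohomology (negative versus super-dimensional) and to cite the correct fact $\Ass_{R_\p}(M_\p)=\{\q R_\p:\q\in\Ass_R(M),\ \q\subseteq\p\}$ when moving between $\p\in\Ass_RM$ and $\p R_\p\in\Ass_{R_\p}M_\p$.
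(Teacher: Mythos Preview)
Your proposal is correct and follows essentially the same route as the paper's proof: for (i) you use the vanishing of local cohomology in negative degrees, and for (ii) you reduce to $H^0_{\p R_\p}(M_\p)\neq 0\Leftrightarrow \p R_\p\in\Ass_{R_\p}(M_\p)\Leftrightarrow \p\in\Ass_R(M)$, exactly as the paper does. Your write-up is slightly more explicit about the underlying facts (identifying $H^0$ with the torsion functor and citing the localization formula for associated primes), but the argument is the same.
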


\begin{proof} (i). Let $\p\in \Psupp^i_R(M)$.  Then $H^{i-\dim (R/\p)}_{\p R_{\p}}(M_{\p})\neq 0.$ Therefore $i\geq \dim (R/\p).$

\noindent (ii).  It is clear that $\p\in \big(\Psupp^i_R(M)\big)_i$ if and only if $H^0_{\p R_{\p}}(M_{\p})\neq 0$ and $\dim (R/\p)=i$, if and only if  $\p R_{\p}\in\Ass_{R_{\p}}(M_{\p})$ and $\dim (R/\p)=i$, if and only if  $\p\in \big(\Ass_RM\big)_i.$  
\end{proof}

Next we  give a relation between $\Psupp^i_RM$ and $\Var (\Ann_RH^i_{\m}(M))$. Before doing that, we need some facts on the secondary representation theory of Artinian modules: Following I. G. Macdonald [Mac], every Artinian $R$-module $A$ has a minimal secondary representation $A=A_1+\ldots +A_n,$ where $A_i$ is $\p_i$-secondary. The set $\{\p_1,\ldots ,\p_n\}$ is independent of the choice of the minimal  secondary representation of $A$. This set is called {\it the set of attached prime ideals}   of $A,$ and denoted by  $\Att_RA$.   
\begin{lemma} \cite{Mac}\label{L:1a} Let $A$ be an Artinian $R$-module. Then  $A\neq 0$ if and only if $\Att_RA\neq \emptyset.$ Moreover, the set of all minimal elements of $\Att_RA$ is exactly the set of all minimal elements of $\Var (\Ann_RA)$. 
\end{lemma}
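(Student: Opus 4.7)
The plan is to lean on the fundamental existence theorem of Macdonald: every Artinian $R$-module $A$ admits a minimal secondary representation $A = A_1 + \ldots + A_n$ in which each $A_i$ is $\p_i$-secondary, and the set $\{\p_1,\ldots,\p_n\}$ depends only on $A$. Granting this, the first claim is immediate in both directions. If $A \neq 0$, any such representation must involve at least one nonzero summand, so $\Att_R A = \{\p_1,\ldots,\p_n\} \neq \emptyset$; conversely, if $A = 0$ the empty sum serves as a secondary representation and yields $\Att_R A = \emptyset$.

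For the second assertion, I would first establish the identity
$$\sqrt{\Ann_R A} \;=\; \bigcap_{i=1}^n \p_i.$$
Because $A = A_1 + \ldots + A_n$, an element $r\in R$ annihilates $A$ if and only if it annihilates each $A_i$, so $\Ann_R A = \bigcap_i \Ann_R A_i$. Taking radicals and using that $\sqrt{\Ann_R A_i} = \p_i$ (this is part of the definition of a $\p_i$-secondary module), the displayed identity follows at once.

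With this identity in hand, the inclusion $\Att_R A \subseteq \Var(\Ann_R A)$ is immediate: each $\p_i$ contains $\Ann_R A_i$, hence contains $\Ann_R A$. Conversely, any prime $\p \in \Var(\Ann_R A)$ satisfies $\bigcap_i \p_i \subseteq \p$, and by primality of $\p$ some $\p_i \subseteq \p$. Thus every prime of $\Var(\Ann_R A)$ dominates some attached prime, while every attached prime already lies in $\Var(\Ann_R A)$; comparing minimal elements on both sides yields the claimed equality.

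The only nontrivial ingredient is Macdonald's existence theorem for secondary representations of Artinian modules; once that is invoked, the argument reduces to routine manipulation of annihilators, radicals, and the prime-avoidance-style use of the primality of $\p$. I do not anticipate any further obstacle.
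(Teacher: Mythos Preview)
Your argument is correct. The paper does not supply a proof of this lemma at all: it is simply quoted from Macdonald's paper \cite{Mac} as a known fact about secondary representation, so there is no in-paper proof to compare against. What you have written is essentially the standard derivation one finds in that source---use the minimal secondary representation to compute $\sqrt{\Ann_R A}=\bigcap_i \p_i$, then read off both assertions---and each step is sound.
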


 \begin{lemma}\label{L:1b} Let $i\geq 0$ be an integer.  Then $\Psupp^i_R(M)\subseteq\Var (\Ann_RH^i_{\m}(M))$. 
 \end{lemma}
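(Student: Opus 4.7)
The plan is to reduce to the completion $\widehat{R}$, which by Cohen's structure theorem is automatically a quotient of a regular (hence Gorenstein) local ring, and then invoke the identity $\Psupp^i_{\widehat R}(\widehat M) = \Var(\Ann_{\widehat R} H^i_{\m\widehat R}(\widehat M))$ recalled in the introduction. The key technical step is to lift a given $\p \in \Psupp^i_R(M)$ to a prime $\widehat P \in \Psupp^i_{\widehat R}(\widehat M)$ with $\widehat P \cap R = \p$, after which the statement follows by contraction of annihilators.

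Concretely, given $\p \in \Psupp^i_R(M)$, set $t = \dim(R/\p)$, so $H^{i-t}_{\p R_\p}(M_\p)\neq 0$. Since $\widehat R/\p\widehat R$ is the $\m$-adic completion of $R/\p$, it is local of dimension $t$, so it admits a minimal prime $\widehat P/\p\widehat R$ with $\dim(\widehat R/\widehat P)=t$. Then $\widehat P$ is minimal over $\p\widehat R$, and by going-down for the faithfully flat map $R/\p \hookrightarrow \widehat R/\p\widehat R$ (whose source is a domain) one obtains $\widehat P \cap R = \p$. The induced local map $R_\p \to \widehat R_{\widehat P}$ is faithfully flat, and by minimality of $\widehat P$ over $\p\widehat R$ the ideals $\p\widehat R_{\widehat P}$ and $\widehat P\widehat R_{\widehat P}$ share the same radical, so flat base change yields
$$H^{i-t}_{\widehat P\widehat R_{\widehat P}}(\widehat M_{\widehat P}) \cong H^{i-t}_{\p R_\p}(M_\p)\otimes_{R_\p}\widehat R_{\widehat P} \neq 0.$$
Since $t = \dim(\widehat R/\widehat P)$, this shows $\widehat P \in \Psupp^i_{\widehat R}(\widehat M)$. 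Applying the complete-case identity gives $\Ann_{\widehat R} H^i_{\m\widehat R}(\widehat M) \subseteq \widehat P$, and using the canonical isomorphism $H^i_\m(M) \cong H^i_{\m\widehat R}(\widehat M)$ one concludes $\Ann_R H^i_\m(M) \subseteq \Ann_{\widehat R} H^i_{\m\widehat R}(\widehat M)\cap R \subseteq \widehat P \cap R = \p$.

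The main obstacle I anticipate is the dimension-matching at the completion level: the lift $\widehat P$ must satisfy $\dim(\widehat R/\widehat P) = \dim(R/\p)$ so that the cohomological degree $i-t$ coming from $\p$ is exactly the one required for $\widehat P$ to lie in the pseudo-support of $\widehat M$. This is arranged by selecting a minimal prime of $\widehat{R/\p}$ attaining the top dimension of that ring, which exists because every Noetherian local ring has a minimal prime realizing its dimension. Once this choice is secured, the remaining ingredients are routine: faithfully flat base change for local cohomology and the identification of annihilators under $R \to \widehat R$.
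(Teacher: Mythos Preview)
Your proof is correct, but it takes a genuinely different route from the paper's. The paper argues entirely over $R$ using secondary representation theory: from $H^{i-\dim(R/\p)}_{\p R_\p}(M_\p)\neq 0$ it picks an attached prime $\q R_\p$ of this Artinian $R_\p$-module, then applies the Weak General Shifted Localization Principle \cite[11.3.8]{BS} to deduce $\q\in\Att_R(H^i_\m(M))$, whence $\Ann_R H^i_\m(M)\subseteq\q\subseteq\p$. Your argument instead bypasses attached primes altogether by lifting $\p$ to a minimal prime $\widehat P$ of $\p\widehat R$ with matching coheight (exactly the construction the paper later uses in the proof of Proposition~\ref{P:1d}), invoking the identity $\Psupp^i_{\widehat R}(\widehat M)=\Var(\Ann_{\widehat R}H^i_{\m\widehat R}(\widehat M))$ available over the complete ring by Cohen's structure theorem, and then contracting. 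The paper's approach is shorter and self-contained modulo the Shifted Localization Principle; your approach trades that black box for the structure theorem and the Gorenstein-quotient identity quoted in the introduction, and has the virtue of making transparent why completion resolves the issue---indeed, your argument simultaneously reproves the inequality $\psd^i(M)\leqslant\psd^i(\widehat M)$ of Proposition~\ref{P:1d}.
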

\begin{proof}  Suppose that  $\p \in\Psupp^i_R (M)$.  Then $H^{i-\dim (R/\p)}_{\p R_{\p}}(M_{\p})\neq 0.$ Since $H^{i-\dim (R/\p)}_{\p R_{\p}}(M_{\p})$ is an Artinian $R_{\p}$-module, there exists by Lemma \ref{L:1a} a prime ideal $\q$ contained in $\p$  such that  $\q R_{\p}\in\Att_{R_{\p}}\big(H^{i-\dim (R/\p)}_{\p R_{\p}}(M_{\p})\big).$  Therefore we get by Weak General Shifted Localization Principle [BS, 11.3.8] that $\q\in\Att_R(H^i_{\m}(M)).$ Hence $\q\supseteq \Ann_R(H^i_{\m}(M))$ by Lemma \ref{L:1a}, and hence $\p\supseteq \Ann_R(H^i_{\m}(M))$. Therefore   $\Psupp^i_R (M)\subseteq \Var (\Ann_RH^i_{\m}(M))$.
\end{proof}

Following M. Brodmann and R. Y. Sharp [BS1], the {\it $i$-th pseudo dimension} of $M$, denoted by $\psd^iM$, is defined by
$$\psd^i (M)=\max\{\dim (R/\p) \mid \p\in\Psupp^i_R(M)\}.$$
So, the module $H^i_{\m}(M)$ is related to the four notions of dimension: $\dim (R/\Ann_RH^i_{\m}(M))$, $\dim (\R/\Ann_{\R}H^i_{\m}(M)),$ $\psd^i(M)$ and $\psd^i(\widehat M).$ Below we compare these notions.

\begin{proposition} \label{P:1d} Let $i\geq 0$ be an integer.  Then
$$\psd^i(M)\leqslant \psd^i(\widehat M)=\dim (\R/\Ann_{\R}H^i_{\m}(M))\leqslant \dim (R/\Ann_RH^i_{\m}(M)).$$
\end{proposition}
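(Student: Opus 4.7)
The plan is to prove the three-term chain by establishing each piece separately, using flat base change for local cohomology, the fact that $\widehat R$ is a quotient of a Gorenstein ring, and that completion preserves Krull dimension.

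\textbf{Step 1: The left inequality $\psd^i(M)\le \psd^i(\widehat M)$.} I would pick $\p\in\Psupp^i_R(M)$ realising $\psd^i(M)=\dim(R/\p)$, and produce $\mathfrak{P}\in\Psupp^i_{\widehat R}(\widehat M)$ with $\dim(\widehat R/\mathfrak{P})=\dim(R/\p)$. The natural candidate is a minimal prime $\mathfrak{P}$ of $\p\widehat R$; because completion preserves dimension, $\dim(\widehat R/\p\widehat R)=\dim(R/\p)$, so some such $\mathfrak{P}$ has the right dimension. The local homomorphism $R_\p\to\widehat R_{\mathfrak{P}}$ is faithfully flat, and $\mathfrak{P}\widehat R_{\mathfrak{P}}$ is the radical of $\p\widehat R_{\mathfrak{P}}$. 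Flat base change for local cohomology then gives
\[
H^{i-\dim(R/\p)}_{\mathfrak{P}\widehat R_{\mathfrak{P}}}(\widehat M_{\mathfrak{P}})\cong H^{i-\dim(R/\p)}_{\p R_\p}(M_\p)\otimes_{R_\p}\widehat R_{\mathfrak{P}}\neq 0,
\]
so $\mathfrak{P}\in\Psupp^i_{\widehat R}(\widehat M)$, as wanted.

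\textbf{Step 2: The middle equality $\psd^i(\widehat M)=\dim(\widehat R/\Ann_{\widehat R}H^i_{\m}(M))$.} Here I use that $\widehat R$, being complete, is a quotient of a Gorenstein local ring; by the discussion in the introduction this gives
\[
\Psupp^i_{\widehat R}(\widehat M)=\Var\bigl(\Ann_{\widehat R}H^i_{\m\widehat R}(\widehat M)\bigr).
\]
Combining this with the standard identification $H^i_{\m}(M)\cong H^i_{\m\widehat R}(\widehat M)$ as $\widehat R$-modules and taking the maximum of $\dim(\widehat R/\mathfrak{P})$ over this closed subset yields the equality.

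\textbf{Step 3: The right inequality $\dim(\widehat R/\Ann_{\widehat R}H^i_{\m}(M))\le\dim(R/\Ann_R H^i_{\m}(M))$.} Set $J=\Ann_R H^i_{\m}(M)$. Since $J\widehat R$ annihilates $H^i_{\m}(M)$ viewed over $\widehat R$, we have $J\widehat R\subseteq\Ann_{\widehat R}H^i_{\m}(M)$, so $\widehat R/\Ann_{\widehat R}H^i_{\m}(M)$ is a quotient of $\widehat R/J\widehat R\cong\widehat{R/J}$; since completion preserves dimension, the inequality follows.

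The main obstacle, and the only genuinely non-trivial point, is Step 1: one has to select $\mathfrak{P}$ as a minimal prime of $\p\widehat R$ with $\dim(\widehat R/\mathfrak{P})=\dim(R/\p)$ (which requires dimension preservation under completion rather than a more delicate catenarity property), and to verify the faithful flatness of $R_\p\to\widehat R_{\mathfrak{P}}$ so that the flat base change produces a non-vanishing local cohomology module at $\mathfrak{P}$. The other two steps are bookkeeping once one knows the complete-case equality of pseudo-support with $\Var(\Ann H^i_{\m})$ and that completion is dimension-preserving.
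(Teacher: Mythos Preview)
Your proposal is correct and Steps 1 and 2 match the paper's argument essentially verbatim: the paper also picks $\widehat\p\in\Ass(\widehat R/\p\widehat R)$ with $\dim(\widehat R/\widehat\p)=\dim(R/\p)$, invokes faithful flatness of $R_\p\to\widehat R_{\widehat\p}$ and flat base change for local cohomology, and for the middle equality simply asserts $\Psupp^i_{\widehat R}(\widehat M)=\Var(\Ann_{\widehat R}H^i_{\m\widehat R}(\widehat M))=\Var(\Ann_{\widehat R}H^i_\m(M))$ (using that $\widehat R$ is a quotient of a Gorenstein ring).

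The one genuine difference is Step 3. The paper argues via attached primes: it takes $\widehat\q\in\min\Att_{\widehat R}H^i_\m(M)$ with $\dim(\widehat R/\widehat\q)=\dim(\widehat R/\Ann_{\widehat R}H^i_\m(M))$, uses that $\widehat\q\cap R\in\Att_R H^i_\m(M)$ (cf.\ \cite[8.2.4, 8.2.5]{BS}), and concludes $\dim(\widehat R/\widehat\q)\le\dim(R/(\widehat\q\cap R))\le\dim(R/\Ann_R H^i_\m(M))$. Your route via the containment $J\widehat R\subseteq\Ann_{\widehat R}H^i_\m(M)$ and $\dim(\widehat R/J\widehat R)=\dim(R/J)$ is more elementary and avoids the secondary-representation machinery entirely; the paper's argument, on the other hand, exhibits an explicit attached prime of $H^i_\m(M)$ over $R$ witnessing the bound, which fits with the paper's emphasis on $\Att_R H^i_\m(M)$ elsewhere. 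Both are fine.
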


 \begin{proof} Let $\p\in\Psupp^i_R(M)$ such that $\psd^i(M)=\dim (R/\p).$  Then $H^{i-\dim (R/\p)}_{\p R_{\p}}(M_{\p})\neq 0.$ Let $\widehat\p\in\Ass (\R/\p\R)$ such that $\dim (R/\p)=\dim (\R/\widehat\p ).$ From the  map $R_{\p}\lr \R_{\widehat\p}$ which is faithful flat, we get by the Base Change Theorem \cite[Theorem 4.3.2]{BS} that $$H^{i-\dim (\R/\widehat\p)}_{\widehat\p\R_{\widehat\p}}(\widehat M_{\widehat\p})\cong H^{i-\dim (R/\p)}_{\p R_{\p}}(M_{\p})\otimes \R_{\widehat\p}\neq 0.$$
It follows that $\widehat\p\in\Psupp^i_{\R}(\widehat M).$ Therefore $\psd^i(\widehat M)\geq \dim (\R/\widehat\p )=\psd^i(M).$ 
It is not difficult to check that $$\Psupp^i_{\R}(\widehat M)= \Var (\Ann_{\R}H^i_{\m\R}(\widehat M))=\Var (\Ann_{\R}H^i_{\m}(M)).$$ Therefore $\psd^i(\widehat M)=\dim (\R/\Ann_{\R}H^i_{\m}(M)).$ For the last inequality, by Lemma \ref{L:1a} there exists  $\widehat\q\in\min \Att_{\R}(H^i_{\m}(M))$ such that  
 $\dim (\R/\widehat\q)=\dim (\R/\Ann_{\R}H^i_{\m}(M)).$  By \cite[8.2.4, 8.2.5]{BS} we get $\widehat\q\cap R\in\Att_R(H^i_{\m}(M))$. So we have by Lemma \ref{L:1a} that
$$\dim (\R/\Ann_{\R}H^i_{\m}(M))=\dim (\R/\widehat\q)\leqslant \dim (R/(\widehat\q\cap R))\leqslant \dim (R/\Ann_RH^i_{\m}(M)).$$
\end{proof}
 It may happen that $\Psupp^i_R(M)$ is a proper subset of $\Var (\Ann_RH^i_{\m}(M))$ and $$\psd^i(M)< \dim (\R/\Ann_{\R}H^i_{\m}(M))<\dim (R/\Ann_RH^i_{\m}(M)).$$ Here is an example.

\begin{example} {\rm  (i). Let $(R,\m)$ be the Noetherian local domain of dimension $2$ constructed by D. Ferrand and M. Raynaud [FR] such that $\dim (\R/\widehat\q)=1$ for some $\widehat\q\in\Ass(\R).$ Then $\Psupp^1(R)=\{\m\}$ and hence $\psd^1(R)=0.$ Moreover  we have  $\dim (\R/\Ann_{\R}H^1_{\m}(R))=1$ and $\dim (R/\Ann_RH^1_{\m}(R))=2,$ cf. \cite[Example 4.1]{CN}. 

\noindent (ii). Let $(R,\m)$ be a Noetherian local domain of dimension $3$ such that $R$ is not catenary. By the similar arguments as in the proof of \cite[Proposition 4.6]{CDN} we can check that $\dim (\R/\Ann_{\R}H^2_{\m}(R))=2$ and $\dim (R/\Ann_RH^2_{\m}(R))=3.$  Since $R$ is not catenary, the set $U=\{\p\in\Spec (R)\mid \dim (R/\p)+\docao (\p)=2\}$ is not empty.  It is clear that $\p\in\Psupp^2(R)$ for all $\p\in U$ and $\dim (R/\p)\leqslant 1$ for all $\p\in\Psupp^2(R).$ Therefore $\psd^2(R)=1$.}
\end{example}

\section{Pseudo supports and non Cohen-Macaulay locus}

   Recall that  {\it the non Cohen-Macaulay locus} of $M$, denoted by $\nCM (M)$, is defined by
$$\nCM (M)=\{\p\in\Spec (R)\mid M_{\p}\ \text{is not Cohen-Macaulay}\}.$$

\begin{theorem}\label{T:1a} Suppose that $\p\in\Supp_R(M)$.  Then

(i)  $\p\in\Psupp^i_R(M)$ for some $i\leqslant d$ and   $$\depth (M_{\p})=k-\dim (R/\p),\ \dim (M_{\p})=t-\dim (R/\p),$$ where  $k=\underset{i\leqslant d}{\min}\{i\mid \frak \p\in\Psupp^i_R(M)\}$ and $t=\underset{i\leqslant d}{\max}\{i\mid \p\in\Psupp^i_R(M)\}.$ 

(ii) $\displaystyle \nCM (M)=\bigcup_{0\leqslant i<j\leqslant d}\big(\Psupp^i_R(M)\cap \Psupp^j_R(M)\big).$

(iii) If  $s\leqslant d$ is an integer then $$\bigcup_{i\leqslant s}\Psupp^i_R(M)=\{\p\in\Supp_R(M)\mid \depth (M_{\p})+\dim (R/\p )\leqslant s\}.$$

(iv)   If  $\displaystyle \p\notin \bigcup_{i<d}\Psupp^i_R(M)$  then $M_{\p}$ is Cohen-Macaulay of dimension $d-\dim (R/\p ).$ 
\end{theorem}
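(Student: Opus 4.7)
The central observation is that $\p\in\Psupp^i_R(M)$ is precisely the non-vanishing statement $H^{j}_{\p R_{\p}}(M_{\p})\neq 0$ with $j=i-\dim(R/\p)$. Hence the set of indices $i$ for which $\p$ lies in $\Psupp^i_R(M)$ is exactly $\{\,j+\dim(R/\p) : H^{j}_{\p R_{\p}}(M_{\p})\neq 0\,\}$. Since $\p\in\Supp_R(M)$, we have $M_{\p}\neq 0$, so by the definition of depth $H^{\depth M_{\p}}_{\p R_{\p}}(M_{\p})\neq 0$, and by Grothendieck's non-vanishing/vanishing theorems $H^{\dim M_{\p}}_{\p R_{\p}}(M_{\p})\neq 0$ while $H^{j}_{\p R_{\p}}(M_{\p})=0$ for $j>\dim M_{\p}$. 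The minimum of the index set above is therefore $\depth M_{\p}+\dim(R/\p)$ and the maximum is $\dim M_{\p}+\dim(R/\p)$, which is exactly what (i) asserts, modulo the truncation $i\leq d$.

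To justify that truncation, I would verify that the maximum $\dim M_{\p}+\dim(R/\p)$ is always at most $d$. This follows from the standard chain argument in the local ring $(R,\m)$: for any $\q\in\Supp_R(M)$ with $\q\subseteq\p$, concatenating saturated chains from $\q$ to $\p$ and from $\p$ to $\m$ gives $\docao(\p/\q)+\dim(R/\p)\leq\dim(R/\q)\leq\dim M=d$, and taking the supremum over such $\q$ yields $\dim M_{\p}+\dim(R/\p)\leq d$. This also delivers the existence assertion in (i): we have $\p\in\Psupp^{\dim M_{\p}+\dim(R/\p)}_R(M)$ with the index $\leq d$, and $\depth M_{\p}\leq\dim M_{\p}$ gives $k\leq t\leq d$.

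Parts (ii), (iii), (iv) then fall out routinely from (i). For (ii), $\p\in\nCM(M)$ means $\p\in\Supp_R(M)$ and $\depth M_{\p}<\dim M_{\p}$, i.e.\ $k<t$, which is precisely $\p\in\Psupp^k_R(M)\cap\Psupp^t_R(M)$ with $0\leq k<t\leq d$; conversely, membership in $\Psupp^i_R(M)\cap\Psupp^j_R(M)$ with $i<j\leq d$ produces two non-vanishing local cohomology modules of $M_{\p}$ in distinct degrees, forcing $M_\p\neq 0$ and $\depth M_{\p}<\dim M_{\p}$, so $\p\in\nCM(M)$. Part (iii) is a direct rewriting of the identity $k=\depth M_{\p}+\dim(R/\p)$. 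For (iv), the hypothesis $\p\notin\bigcup_{i<d}\Psupp^i_R(M)$ together with the existence part of (i) forces $k=t=d$, hence $\depth M_{\p}=\dim M_{\p}=d-\dim(R/\p)$, which is the claimed Cohen--Macaulay property.

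I anticipate no serious obstacle. Everything reduces to Grothendieck vanishing/non-vanishing for finitely generated modules over the local ring $R_\p$ and the elementary inequality $\docao(\p/\q)+\dim(R/\p)\leq\dim(R/\q)$ available in any local ring. The only mild subtlety is confirming that the truncation $i\leq d$ in the definitions of $k$ and $t$ is harmless, which is exactly the role of the chain inequality.
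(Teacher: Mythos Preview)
Your proposal is correct and follows essentially the same approach as the paper: both arguments rest on the translation $\p\in\Psupp^i_R(M)\Leftrightarrow H^{i-\dim(R/\p)}_{\p R_\p}(M_\p)\neq 0$, the depth characterization as the least nonvanishing local cohomology degree, Grothendieck's vanishing/non-vanishing for the dimension, and the inequality $\dim M_\p+\dim(R/\p)\leq d$ to handle the cutoff $i\leq d$. Your presentation is a bit more conceptual (describing the index set as a shift of the nonvanishing degrees), but the content is identical; for (iv) the paper routes through (iii) rather than directly invoking $k=t=d$, which is a cosmetic difference.
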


\begin{proof} (i). Since $\p\in\Supp_R(M)$, we have $M_{\p}\neq 0.$ Set $\dim M_{\p}=n.$ Then $n\geq 0$. Hence $H^n_{\p R_{\p}}(M_{\p})\neq 0.$ Since $n+\dim (R/\p)\leqslant d,$ we have $n=i-\dim (R/\p)$ for some $i\leqslant d.$  Therefore $H^{i-\dim (R/\p )}_{\p R_{\p}}(M_{\p})\neq 0,$ i.e. $\p\in\Psupp^i_R(M)$ for some $i\leqslant d.$  

Set $k=\underset{i\leqslant d}{\min}\{i\mid \p\in\Psupp^i(M)\}$.  Then $\p\in\Psupp^k_R(M)$, so $H^{k-\dim (R/\p)}_{\p R_{\p}}(M_{\p})\neq 0$.  Since $\p\notin\Psupp^i_R(M)$ for all $i<k$, we have  $H^{i-\dim (R/\p)}_{\p R_{\p}}(M_{\p})=0$ for all $i<k.$ Therefore $\depth (M_{\p})=k-\dim (R/\p ).$ Set $t=\underset{i\leqslant d}{\max}\{i\mid \p\in\Psupp^i_R(M)\}$.  Then we have $\p\in\Psupp^t_R(M)$, and hence $H^{t-\dim (R/\p)}_{\p R_{\p}}(M_{\p})\neq 0$.  Because $\p\notin\Psupp^i_R(M)$ for all $i>t$, we have that $H^{i-\dim (R/\p)}_{\p R_{\p}}(M_{\p})=0$ for all $i>t.$ It follows that  $\dim (M_{\p})=t-\dim (R/\p ).$

(ii). Let $\p\in\nCM(M).$ Then  $\depth (M_{\p})<\dim (M_{\p}).$ By (i) we obtain  $k<t$, where $k=\min\{i\mid \p\in\Psupp^i_R(M)\}$ and $t=\max\{i\mid \p\in\Psupp^i_R(M)\}$.  So, we have $k<t$ and  $\p\in\Psupp^k_R(M)\cap \Psupp^t_R(M)$. Conversely, if $\p\in \Psupp^i_R(M)\cap \Psupp^j_R(M)$ for some $0\leqslant i<j\leqslant d$ then $\depth (M_{\p})\leqslant i-\dim (R/\p)<j-\dim (R/\p)\leqslant \dim (M_{\p})$ by (i), and hence $\p\in\nCM(M).$  

(iii)  Let $\displaystyle \p\in\bigcup_{i\leqslant s}\Psupp^i_R(M).$  Then $\p\in\Psupp^r_R(M)$ for some $r\leqslant s.$ Set $$k=\min\{i\mid \p\in\Psupp^i_R(M)\}.$$ Then $k\leqslant r\leqslant s.$ Therefore we have by (i) that  $$\depth (M_{\p})+\dim (R/\p)=(k-\dim (R/\p))+\dim (R/\p)=k\leqslant s.$$ 
Conversely, let $\p\in\Supp_R(M)$ such that  $\depth (M_{\p})+\dim (R/\p)\leqslant s.$ If $\displaystyle \p\notin\bigcup_{i\leqslant s}\Psupp^i_R(M)$ then $\depth (M_{\p})>s-\dim (R/\p)$ by (i), i.e. $\depth (M_{\p})+\dim (R/\p)>s$, a contradiction.

(iv)  Assume that  $\displaystyle \p\notin \bigcup_{i<d}\Psupp^iM$. Then we get  by (iii)  that $\depth (M_{\p})+\dim (R/\p)=d$. Therefore  $M_{\p}$ is Cohen-Macaulay of dimension $d-\dim (R/\p).$
\end{proof}

\begin{corollary} \label{C:2a} Suppose that $M$ is equidimensional and the  ring $R/\Ann_RM$ is  catenary. Then  $\Psupp^i_R(M)$ is closed for $i=0,1,d$ and  $\displaystyle \nCM (M)=\bigcup_{i=0}^{d-1} \Psupp^i_R(M).$  
\end{corollary}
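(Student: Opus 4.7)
The plan rests on the key consequence of the hypotheses: under equidimensionality of $M$ and catenarity of $R/\Ann_R M$, the classical dimension formula gives $\dim M_\p + \dim(R/\p) = d$ for every $\p \in \Supp_R(M)$. First, I would use this to prove $\Psupp^d_R(M) = \Supp_R(M) = \Var(\Ann_R M)$: for any $\p \in \Supp_R(M)$ the top local cohomology $H^{\dim M_\p}_{\p R_\p}(M_\p)$ is nonzero, and the dimension formula identifies the exponent with $d - \dim(R/\p)$, so $\p \in \Psupp^d_R(M)$. Since the reverse inclusion is immediate, closedness at $i = d$ is settled.

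For $i = 0$ and $i = 1$, I would argue via Lemma \ref{L:1d}, which in fact does not require the hypotheses. Part (i) forces $\Psupp^0_R(M) \subseteq \{\m\}$, which is closed. For $\Psupp^1_R(M)$, parts (i) and (ii) together show its elements lie in the finite set $\{\m\} \cup (\Ass_R M)_1 = \{\m, \p_1,\ldots,\p_k\}$. The closedness reduces to showing $\m \in \Psupp^1_R(M)$ whenever $k \geq 1$, for then $\Psupp^1_R(M) = \bigcup_j \Var(\p_j)$. To establish this I would pick $\p \in (\Ass_R M)_1$, fix an embedding $R/\p \hookrightarrow M$ with cokernel $N$, and exploit the exact sequence
$$H^0_\m(N) \lr H^1_\m(R/\p) \lr H^1_\m(M).$$
The first term is of finite length, while the second, being the top local cohomology of the $1$-dimensional domain $R/\p$, satisfies $\Ann_R H^1_\m(R/\p) \subseteq \p \subsetneq \m$ and is therefore not of finite length. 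Hence the cokernel of the first map embeds nontrivially into $H^1_\m(M)$, forcing $\m \in \Psupp^1_R(M)$.

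Finally, the formula $\nCM(M) = \bigcup_{i=0}^{d-1} \Psupp^i_R(M)$ follows from Theorem \ref{T:1a}(ii). The inclusion $\subseteq$ is immediate because in $\bigcup_{0 \leq i < j \leq d}\Psupp^i_R(M) \cap \Psupp^j_R(M)$ one always has $i \leq d-1$. For the reverse inclusion, let $i \leq d-1$ and $\p \in \Psupp^i_R(M)$; since $\Psupp^i_R(M) \subseteq \Supp_R(M) = \Psupp^d_R(M)$ by the first step, we have $\p \in \Psupp^i_R(M) \cap \Psupp^d_R(M)$ with $i < d$, so $\p \in \nCM(M)$ by Theorem \ref{T:1a}(ii). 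I anticipate the main obstacle to be the closedness of $\Psupp^1_R(M)$, specifically verifying that a $1$-dimensional associated prime of $M$ forces $\m$ into $\Psupp^1_R(M)$; the remaining claims are direct applications of the dimension formula and of Theorem \ref{T:1a}.
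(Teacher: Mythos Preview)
Your proof is correct and reaches the same conclusions, but the route differs from the paper's in two places worth noting.

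For $\Psupp^d_R(M)$, the paper cites an external result (\cite[Corollary 3.4(iv)]{NA}) to obtain $\Psupp^d_R(M)=\Var(\Ann_RM)$, whereas you derive it directly from the dimension formula $\dim M_\p+\dim(R/\p)=d$ and the nonvanishing of top local cohomology. Your argument is more self-contained and makes transparent exactly where equidimensionality and catenarity enter.

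For $\Psupp^1_R(M)$, the approaches genuinely diverge. The paper observes that $\Psupp^1_R(M)$ has finitely many minimal elements and then invokes the fact (from \cite[Lemma 2.2]{BS1}, which uses catenarity of $R/\Ann_RM$) that each $\Psupp^i_R(M)$ is stable under specialization; a specialization-closed set with finitely many minimal elements is closed. You instead prove directly that $\m\in\Psupp^1_R(M)$ whenever $(\Ass_RM)_1\neq\emptyset$, via the long exact sequence arising from $0\to R/\p\to M\to N\to 0$ and the observation that $H^1_\m(R/\p)$ cannot have finite length because $\p\in\Att_RH^1_\m(R/\p)$. Your argument is elementary and, as you note, does not use the catenarity hypothesis at all for $i=0,1$; the paper's argument is shorter once the specialization lemma is available, and that lemma is reusable (the paper invokes it again in Corollary~\ref{C:2}).

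The derivation of $\nCM(M)=\bigcup_{i=0}^{d-1}\Psupp^i_R(M)$ from Theorem~\ref{T:1a}(ii) and $\Psupp^d_R(M)=\Supp_R(M)$ is identical in both proofs.
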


\begin{proof}   It is clear that $\Psupp^0_R(M)\subseteq\{\m\}$, hence $\Psupp^0_R(M)$ is closed. Let $\p\in\Psupp^1(M),$ then $\dim (R/\p)\leqslant 1.$ If $\dim (R/\p)=1$ then  $H^0_{\p R_{\p}}(M_{\p})\neq 0$ and hence $\p\in \Ass_RM.$ So, $\Psupp^1_R(M) \subseteq\{\m\}\cup \{\p\in\Ass M\mid \dim (R/\p)=1\}.$ Hence $\Psupp^1(M)$ has finitely many minimal elements. Since $R/\Ann_RM$ is  catenary, $\Psupp^1(M)$ is closed under specialization by [BS1, Lemma 2.2]. It follows that $\Psupp^1_R(M)$ is closed.   As $M$ is equidimensional, we have $$\Var(\Ann_R M)=\underset{\p\in\Ass M, \dim (R/\p)=d}{\bigcup}\Var (\p)=\Var (\Ann_RH^d_{\m}(M)).$$   Since $R/\Ann_RM$ is catenary, $\Psupp^d_R(M)=\Var (\Ann_RM)$ by [NA, Corollary 3.4(iv)]. Therefore $\Psupp^d_R(M)$ is closed and  $\Psupp^i_R(M)\cap \Psupp^d_R(M)=\Psupp^i_R(M)$ for all $i=1,\ldots ,d-1.$  Now the result follows by Theorem \ref{T:1a}(ii).
\end{proof}

 The following result, which is an immediate consequence of Theorem \ref{T:1a}(ii), gives a sufficient condition for the non Cohen-Macaulay locus of $M$ to be closed.  

\begin{corollary} \label{C:1} If $\Psupp^i_R(M)$ is closed for all $i\leqslant d$ then $\nCM (M)$ is closed.
\end{corollary}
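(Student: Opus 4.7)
The plan is to combine the hypothesis with the explicit formula for $\nCM(M)$ supplied by Theorem \ref{T:1a}(ii). That formula expresses the non Cohen-Macaulay locus as
$$\nCM(M)=\bigcup_{0\leqslant i<j\leqslant d}\bigl(\Psupp^i_R(M)\cap\Psupp^j_R(M)\bigr),$$
so the whole matter reduces to a formal closure argument on this finite union of finite intersections.

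More precisely, I would argue as follows. By the hypothesis, each set $\Psupp^i_R(M)$ with $0\leqslant i\leqslant d$ is closed in $\Spec(R)$. For any pair of indices $0\leqslant i<j\leqslant d$, the intersection $\Psupp^i_R(M)\cap\Psupp^j_R(M)$ is then closed, being the intersection of two closed sets. Since the index set $\{(i,j):0\leqslant i<j\leqslant d\}$ is finite, Theorem \ref{T:1a}(ii) exhibits $\nCM(M)$ as a finite union of closed subsets of $\Spec(R)$, hence closed. This is really a one-line deduction, so there is no substantive obstacle: the entire content of the corollary sits in Theorem \ref{T:1a}(ii), and once that identity is in hand the finiteness of the union over $i,j\leqslant d$ is what makes the argument go through without any further hypothesis (for instance, we do not need closedness under specialization, nor any catenarity assumption on $R/\Ann_RM$).
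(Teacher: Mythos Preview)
Your argument is correct and matches the paper's own reasoning: the paper presents this corollary as an immediate consequence of Theorem \ref{T:1a}(ii), which is exactly the formula you invoke, followed by the same finite-union-of-intersections closure observation.
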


It is natural to ask if the  converse statement of Corollary \ref{C:1} is true. Here is an answer for the case where $M$ is equidimensional  of dimension $3$. 

\begin{corollary} \label{C:2} Suppose that $M$ is equidimensional and $\dim M=3$.   If $R/\Ann_RM$ is  catenary then  $\Psupp^i_R(M)$ is closed for all $i\neq 2$ and $\displaystyle \nCM (M)=\bigcup_{i=0}^2\Psupp^i_R(M).$ In this case, $\nCM (M)$ is closed if and only if $\Psupp^2_R(M)$ is closed. 
\end{corollary}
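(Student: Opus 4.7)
The plan is to deduce the first two assertions immediately from Corollary~\ref{C:2a} specialized to $d=3$: the closedness assertion there covers $i\in\{0,1,d\}=\{0,1,3\}$, which is exactly the set of indices $i\neq 2$, and the displayed union formula becomes $\nCM(M)=\bigcup_{i=0}^{2}\Psupp^i_R(M)$. For the ``if and only if'' part, one direction is immediate: if $\Psupp^2_R(M)$ is closed then $\nCM(M)$ is a finite union of closed sets, hence closed.

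For the converse I would fix the hypothesis that $\nCM(M)$ is closed and prove closedness of $\Psupp^2_R(M)$. Since $R/\Ann_R M$ is catenary, $\Psupp^2_R(M)$ is closed under specialization by \cite[Lemma~2.2]{BS1}, so it suffices to verify that $\Psupp^2_R(M)$ has only finitely many minimal elements; the identity $\Psupp^2_R(M)=\bigcup_{\p\in\min\Psupp^2_R(M)}\Var(\p)$ will then give closedness. By Lemma~\ref{L:1d}(i) a minimal element $\p$ satisfies $\dim(R/\p)\in\{0,1,2\}$; those with $\dim(R/\p)=2$ lie in the finite set $(\Ass_R M)_2$ by Lemma~\ref{L:1d}(ii), and in dimension zero the only candidate is $\m$.

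The main obstacle is therefore the dimension-one case. Using equidimensionality of $M$ and catenarity of $R/\Ann_R M$ one has $\dim M_{\q}=3-\dim(R/\q)$ for every $\q\in\Supp_R(M)$; combined with Theorem~\ref{T:1a}(i) this forces $\Psupp^2_R(M)\subseteq\nCM(M)$. Given a minimal $\p\in\Psupp^2_R(M)$ with $\dim(R/\p)=1$, I would pick $\p_0\in\min\nCM(M)$ with $\p_0\subseteq\p$ and aim to show $\p_0=\p$; since $\nCM(M)$ is closed, $\min\nCM(M)$ is finite, so this would complete the proof. To rule out $\p_0\subsetneq\p$: if $\dim(R/\p_0)=3$ then $\p_0\in\min\Ass_R M$ gives $\dim M_{\p_0}=0$ and hence $M_{\p_0}$ Cohen-Macaulay, contradicting $\p_0\in\nCM(M)$; if $\dim(R/\p_0)=2$ then $\dim M_{\p_0}=1$ together with $\p_0\in\nCM(M)$ forces $\depth M_{\p_0}=0$, so $\p_0\in(\Ass_R M)_2\subseteq\Psupp^2_R(M)$, contradicting minimality of $\p$ in $\Psupp^2_R(M)$.
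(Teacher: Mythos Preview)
Your argument is correct and follows essentially the same route as the paper's. The only cosmetic differences are that the paper argues the converse contrapositively (assuming $\Psupp^2_R(M)$ is not closed and deducing that $\nCM(M)$ has infinitely many minimal elements), and it proves that each minimal $\p\in\Psupp^2_R(M)$ with $\dim(R/\p)\geq 1$ is minimal in $\nCM(M)$ in one stroke by noting that every prime in $\Psupp^0_R(M)\cup\Psupp^1_R(M)$ has $\dim(R/\cdot)\leq 1$, rather than splitting into the $\dim(R/\p_0)=2$ and $\dim(R/\p_0)=3$ cases as you do.
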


\begin{proof}   By Corollary \ref{C:2a},  $\Psupp^i_R(M)$ is closed for all $i\neq 2$ and  $\displaystyle \nCM (M)=\bigcup_{i=0}^2\Psupp^i_R(M).$ So,  if $\Psupp^2_R(M)$ is closed then so is $\nCM(M).$ Assume $\Psupp^2_R(M)$ is not closed.   Since the ring $R/\Ann_RM$ is catenary, $\Psupp^2_R(M)$ is closed under specialization by [BS1, Lemma 2.2].  As $\Psupp^2_R(M)$ is not closed, it has infinitely many minimal elements. Note that $1\leqslant \dim(R/\p)\leqslant 2$ for all $\p\in\min\Psupp^2_R(M)$ by Lemma \ref{L:1d}(i), and $\dim (R/\p)\leqslant 1$ for all $\p\in\Psupp^1_R(M)\cup \Psupp^0_R(M).$  So,  each minimal element of $\Psupp^2_R(M)$ is  minimal in $\nCM(M).$ Therefore $\nCM(M)$ has infinitely many minimal elements and hence it is not closed. \end{proof}

 By M. Brodmann and R. Y. Sharp [BS1, Example 3.1],  there exists a  Noetherian local domain $(R,\m)$ of dimension $3$  such that $R$ is universally catenary,  $\Psupp^2(R)$ is not closed, and the non Cohen-Macaulay locus of $R$ is not  closed.

In case  the ring $R/\Ann_RM$ is not catenary, the converse statement of Corollary \ref{C:1} is not true. Before giving an example, we need the following result.

\begin{corollary} \label{C:3} Suppose that $\dim M=3$ and  $\dim (R/\p )=3$ for all $\p\in\Ass_R M$.  Assume that the ring $R/\Ann_RM$ is not catenary. Then $\Psupp^3_R(M)$ is not closed. Moreover,  $\Psupp^0_R(M)=\emptyset$, $\Psupp^1_R(M)\subseteq\{\m\}$ and
 $$\nCM (M)=\Psupp^2_R(M)\cap \Psupp^3_R(M).$$ 
\end{corollary}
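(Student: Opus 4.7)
The plan is to handle the four assertions in order, with the non-closedness of $\Psupp^3_R(M)$ and the formula for $\nCM(M)$ both resting on the same consequence of non-catenarity. The easy assertions $\Psupp^0_R(M)=\emptyset$ and $\Psupp^1_R(M)\subseteq\{\m\}$ come directly from Lemma \ref{L:1d}: any $\p\in\Psupp^0_R(M)$ satisfies $\dim R/\p=0$, hence $\p\in(\Ass_RM)_0$, which is empty by hypothesis; the same argument confines $\Psupp^1_R(M)$ to $\{\m\}\cup(\Ass_RM)_1=\{\m\}$.

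For the non-closedness of $\Psupp^3_R(M)$ I would argue by contradiction. By Lemma \ref{L:1d}(ii), every minimal prime of $\Ann_R M$ lies in $\Psupp^3_R(M)$, while the inclusion $\Psupp^3_R(M)\subseteq\Supp_R(M)$ is automatic. A closed set that contains every minimal prime of $\Ann_R M$ must contain $\Var(\Ann_R M)=\Supp_R(M)$, so closedness of $\Psupp^3_R(M)$ would force $\Psupp^3_R(M)=\Supp_R(M)$. By Theorem \ref{T:1a}(i) together with Grothendieck vanishing, this equality is equivalent to the dimension formula $\dim M_\p+\dim R/\p=3$ holding on all of $\Supp_R(M)$, and, for equidimensional $M$, such a global dimension formula is equivalent to catenarity of $R/\Ann_R M$ (cf.\ the Ratliff-type reasoning used in Example 2.5(ii)), contradicting the hypothesis.

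For the formula for $\nCM(M)$, Theorem \ref{T:1a}(ii) gives $\nCM(M)=\bigcup_{0\le i<j\le 3}\big(\Psupp^i_R(M)\cap\Psupp^j_R(M)\big)$. Since $\Psupp^0_R(M)=\emptyset$ and $\Psupp^1_R(M)\subseteq\{\m\}$, this collapses to
\[
\nCM(M)=\big(\Psupp^1_R(M)\cap\Psupp^2_R(M)\big)\cup\big(\Psupp^1_R(M)\cap\Psupp^3_R(M)\big)\cup\big(\Psupp^2_R(M)\cap\Psupp^3_R(M)\big),
\]
whose first two pieces lie in $\{\m\}$. It then suffices to verify $\m\in\Psupp^2_R(M)\cap\Psupp^3_R(M)$: the membership $\m\in\Psupp^3_R(M)$ is clear from $\dim M=3$; for $\m\in\Psupp^2_R(M)$ I would recycle the non-catenarity witness $\p^*\in\Supp_R(M)$ with $\dim M_{\p^*}+\dim R/\p^*<3$, observe by Theorem \ref{T:1a}(i) that $\p^*$ can only lie in $\Psupp^2_R(M)$ (it is not $\m$, so $\p^*\notin\Psupp^0_R(M)\cup\Psupp^1_R(M)$ by the first paragraph, and the strict inequality rules out $\Psupp^3_R(M)$), and then invoke Lemma \ref{L:1b} to pass from $\Psupp^2_R(M)\neq\emptyset$ to $H^2_\m(M)\neq 0$. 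The first two pieces of the union are thereby absorbed into the third.

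The main obstacle is the equivalence, for equidimensional $M$, between catenarity of $R/\Ann_R M$ and the pointwise dimension formula $\dim M_\p+\dim R/\p=\dim M$ on $\Supp_R(M)$; this single fact underpins both the non-closedness of $\Psupp^3_R(M)$ and the membership $\m\in\Psupp^2_R(M)$. I would establish the direction actually needed (non-catenary $\Rightarrow$ existence of $\p^*$ with $\dim M_{\p^*}+\dim R/\p^*<d$) either by citing a standard Ratliff-type catenarity criterion or by reproducing the computation already sketched in Example 2.5(ii).
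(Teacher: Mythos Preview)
Your argument is correct, and the reductions via Lemma~\ref{L:1d} and Theorem~\ref{T:1a}(ii) match the paper's. The genuine divergence is in how you obtain $\m\in\Psupp^2_R(M)$. The paper passes to the completion: from a non-catenary $R/\p$ with $\p\in\Ass_RM$ it extracts, via \cite[Proposition~4.6]{CDN}, a prime $\widehat\p\in\Ass_{\R}(\R/\p\R)$ with $\dim(\R/\widehat\p)=2$, places $\widehat\p$ in $\Ass_{\R}\widehat M$ using \cite[Theorem~23.2]{Mat}, and then invokes \cite[11.3.3]{BS} to conclude $\widehat\p\in\Att_{\R}H^2_{\m}(M)$, whence $H^2_\m(M)\neq 0$. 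You instead stay over $R$: non-catenarity produces $\p^*\in\Supp_RM$ with $\dim M_{\p^*}+\dim R/\p^*<3$; since $\p^*$ is neither $\m$ nor minimal in $\Supp_RM$, Theorem~\ref{T:1a}(i) forces $\p^*\in\Psupp^2_R(M)$, and Lemma~\ref{L:1b} then gives $H^2_\m(M)\neq 0$. Your route avoids the completion and the attached-prime calculus entirely, at the price of isolating the equivalence (for equidimensional $M$) between catenarity of $R/\Ann_RM$ and the identity $\Psupp^d_R(M)=\Var(\Ann_RM)$; but that equivalence is precisely \cite[Corollary~3.4(iv)]{NA}, which the paper itself invokes for the non-closedness of $\Psupp^3_R(M)$, so your ``main obstacle'' is already available and both halves of your argument close cleanly once you cite it.
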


\begin{proof}  Since $R/\Ann_R M$ is not catenary and $M$ is equidimensional, $\Psupp^3_R(M)$ is not closed  by [NA, Corollary 3.4(iv)].  It is clear that $\Psupp^0_R(M)=\emptyset$ and $\Psupp^1_R(M)\subseteq\{\m\}.$ So, by Theorem \ref{T:1a}, to show that  $\nCM (M)=\Psupp^2_R(M)\cap \Psupp^3_R(M),$ it is enough to check that $\m\in  \Psupp^2_R(M)\cap \Psupp^3_R(M).$ As $H^3_{\m}(M)\neq 0$, we get $\m\in\Psupp^3_R(M).$   As $R/\Ann_RM$ is not catenary, there exists $\p\in\Ass_RM$ such that $R/\p$ is  a non catenary domain of dimension $3$. Therefore by the same arguments as in the proof of \cite[Proposition 4.6(iv)]{CDN}, there exists $\widehat\p\in\Ass_{\R} (\R/\p\R)$ such that $\dim (\R/\widehat\p)=2.$ Since $\displaystyle \Ass_{\R} \widehat M=\bigcup_{\q\in\Ass_R M}\Ass (\R/\q\R)$ by \cite[Theorem 23.2(ii)]{Mat}, it follows that $\widehat\p\in\Ass_{\R}\widehat M.$ So $\widehat\p\in\Att_{\R}(H^2_{\m\R}(\widehat M))$ by \cite[11.3.3]{BS}.   Hence $H^2_{\m}(M)\neq 0$. So $\m\in\Psupp^2_R(M).$
\end{proof}

Now we give an example to show that the converse statement of Corollary \ref{C:1} is not true.
\begin{example} There exists a Noetherian local domain $R$ of dimension $3$ such that the non Cohen-Macaulay locus of $R$ is closed, but $\Psupp^2(R)$ and $\Psupp^3(R)$ are not closed.  
\end{example} 
\begin{proof} It follows by [BS1, Example 3.2] that there exists a Noetherian local domain $(R,\m)$ of dimension $3$  such that $R$ is not catenary, $\Psupp^2(R)$ and $\Psupp^3(R)$ are not closed and  \begin{align}\Psupp^2(R)\setminus\{\m,0\}&=\{\p\in\Spec R\mid \docao (\p)+\dim (R/\p)=2\},\notag\\
\Psupp^3(R)&=\{\p\in\Spec R\mid \docao (\p)+\dim (R/\p)=3\}.\notag
\end{align}
Therefore we get by Corollary \ref{C:3} that $$\nCM(M)=\Psupp^2(R)\cap \Psupp^3(R)\subseteq \{\m, 0\}.$$
It is clear that $0\notin\Psupp^2(R).$  As $R$ is not catenary, $R$ is not Cohen-Macaulay. Therefore $\nCM(M)=\{\m\}$ which is closed.
\end{proof}

\section{A connection with the universal catenarity and unmixedness}

From now on, for each integer $i$ we set $\frak a_i(M)=\Ann_RH^i_{\m}(M)$. Set $$\frak a(M)=\frak a_0(M)\frak a_1(M)\ldots \frak a_{d-1}(M).$$

Let $i$ be an integer. By Lemma \ref{L:1b}, $\Psupp^i_RM$ is a  subset of $\Var (\frak a_i(M))$.  Here is a criterion for these sets to be the same. 

\begin{lemma} \label{L:1c} {\rm \cite[Proposition 2.5]{BS1}} If the ring $R/\Ann_RM$ is universally catenary and all its formal fibres are Cohen-Macaulay then  $\Psupp^i_RM=\Var (\frak a_i(M))$ for all integers $i$.
\end{lemma}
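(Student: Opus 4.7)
Lemma \ref{L:1b} already gives the inclusion $\Psupp^i_R(M)\subseteq\Var(\frak a_i(M))$ without any hypothesis on $R$, so the content of the statement is the reverse inclusion. My plan is to reduce the non-vanishing of $H^{i-\dim(R/\p)}_{\p R_{\p}}(M_{\p})$ to the analogous statement in the $\m$-adic completion $\R$, where a strong form of the Shifted Localization Principle is available through the canonical module and local duality, and then to transfer the conclusion back to $R$ by faithfully flat base change.

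In more detail, fix $\p\in\Var(\frak a_i(M))$. Lemma \ref{L:1a} produces a $\q\in\min\Att_R H^i_{\m}(M)$ with $\q\subseteq\p$. I would first use \cite[8.2.4, 8.2.5]{BS} together with the CM formal fibre hypothesis at $\q$ to obtain $\widehat\q\in\Att_{\R}H^i_{\m\R}(\widehat M)$ with $\widehat\q\cap R=\q$ and $\dim(\R/\widehat\q)=\dim(R/\q)$. Next, I would use the CM formal fibre hypothesis at $\p$, together with the universal catenarity of $R/\Ann_RM$, to find $\widehat\p\in\Spec\R$ with $\widehat\p\cap R=\p$, $\widehat\p\supseteq\widehat\q$ and $\dim(\R/\widehat\p)=\dim(R/\p)$. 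In the complete ring $\R$, which is a quotient of a Gorenstein ring, the identifications $\Att_{\R}H^i_{\m\R}(\widehat M)=\Ass_{\R}(K^i_{\widehat M})$ and $(K^i_{\widehat M})_{\widehat\p}\cong K^{i-\dim(\R/\widehat\p)}_{\widehat M_{\widehat\p}}$ hold, so $\widehat\q\R_{\widehat\p}\in\Ass_{\R_{\widehat\p}}K^{i-\dim(\R/\widehat\p)}_{\widehat M_{\widehat\p}}$ and local duality gives $H^{i-\dim(\R/\widehat\p)}_{\widehat\p\R_{\widehat\p}}(\widehat M_{\widehat\p})\neq 0$. Finally, the faithfully flat base change $R_{\p}\to\R_{\widehat\p}$, identical to the one used in the proof of Proposition \ref{P:1d}, gives
$$H^{i-\dim(R/\p)}_{\p R_{\p}}(M_{\p})\otimes_{R_{\p}}\R_{\widehat\p}\cong H^{i-\dim(\R/\widehat\p)}_{\widehat\p\R_{\widehat\p}}(\widehat M_{\widehat\p})\neq 0,$$
so $\p\in\Psupp^i_R(M)$.

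The main obstacle I expect is the dimension bookkeeping: ensuring both $\dim(R/\q)=\dim(\R/\widehat\q)$ and $\dim(R/\p)=\dim(\R/\widehat\p)$, so that the cohomological degrees on the two sides of the base change formula actually agree. The CM formal fibre hypothesis supplies primes of $\R$ over $\q$ and over $\p$ preserving dimension, and the universal catenarity of $R/\Ann_RM$ is what makes it possible to choose them compatibly with the chain $\widehat\q\subseteq\widehat\p$; dropping either hypothesis allows the contractions to collapse dimension and the reduction step breaks.
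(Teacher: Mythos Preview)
The paper does not supply a proof of Lemma~\ref{L:1c}; the result is quoted from \cite[Proposition~2.5]{BS1} and used as a black box throughout Section~4, so there is no in-paper argument to compare your proposal against. Your outline---pass to $\R$, where the equality $\Psupp^i_{\R}(\widehat M)=\Var(\Ann_{\R}H^i_{\m}(M))$ holds because $\R$ is a quotient of a regular local ring and local duality is available for the modules $K^i_{\widehat M}$, and then descend along the faithfully flat map $R_{\p}\to\R_{\widehat\p}$ exactly as in the proof of Proposition~\ref{P:1d}---is the standard route and is essentially how \cite{BS1} proceeds. You have also correctly located the only genuine difficulty: producing a prime $\widehat\p$ over $\p$ that simultaneously lies in $\Var(\Ann_{\R}H^i_{\m}(M))$ and satisfies $\dim(\R/\widehat\p)=\dim(R/\p)$.

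One caution about your proposed construction of that prime. Lifting $\q$ first to $\widehat\q\in\Att_{\R}H^i_{\m}(M)$ and then seeking $\widehat\p\supseteq\widehat\q$ lying over $\p$ is a going-up statement for the extension $R/\q\to\R/\widehat\q$, which is in general neither integral nor flat, so this step does not follow from \cite[8.2.4, 8.2.5]{BS} and the Cohen--Macaulay fibre condition at $\q$ alone. The two hypotheses together do make a compatible choice possible---universal catenarity of $R/\Ann_RM$ forces, via \cite[Theorem~31.7]{Mat}, every minimal prime of $\p\R$ to have dimension $\dim(R/\p)$, and the Cohen--Macaulay formal fibre condition is what controls how the attached primes of $H^i_{\m}(M)$ behave under the passage $R\to\R$---but carrying this out is precisely the content of \cite[Proposition~2.5]{BS1}. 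Your sketch would need to make that argument explicit rather than assert the existence of $\widehat\q\subseteq\widehat\p$ with the stated dimension properties.
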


  Using Lemma \ref{L:1c} together with the facts in Sections 2,3, we have the following properties, some of them have already known in case where the ring $R$ is a quotient of a Gorenstein local ring, cf.  \cite[Proposition 3.8]{Sh}, \cite[Corollaries 3,6]{Sch1}, \cite[Theorem 1.2]{C}. 

\begin{corollary}\label{C:6} Let $i\geq 0$ be an integer.  Suppose that $R/\Ann_RM$ is universally catenary and all its formal fibres are Cohen-Macaulay. Then we have

(i) $\dim (R/\p)\leqslant i$ for all $\p\in\Var(\frak a_i(M))$.

(ii) $(\Att_R H^i_{\m}(M))_i=(\Var (\frak a_i(M)))_i=(\Ass_RM)_i.$

(iii) $\psd^i(M)=\psd^i(\widehat M)=\dim (R/\frak a_i(M)).$

(iv) $\nCM(M)$ is closed.

 (v) If $M$ is equidimensional then $\displaystyle \nCM (M)=\Var (\frak a(M)).$ 

\end{corollary}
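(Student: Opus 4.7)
The plan is to leverage Lemma \ref{L:1c}, which identifies $\Psupp^i_R(M)$ with $\Var(\mathfrak{a}_i(M))$ under our hypotheses, and then transport every structural statement proved earlier for pseudo supports to statements about $\Var(\mathfrak{a}_i(M))$. For (i), I would simply combine Lemma \ref{L:1d}(i) with Lemma \ref{L:1c}: any $\mathfrak{p}\in\Var(\mathfrak{a}_i(M))$ lies in $\Psupp^i_R(M)$, so $\dim(R/\mathfrak{p})\le i$.

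For (ii), I would first argue that $(\Var(\mathfrak{a}_i(M)))_i=(\Ass_R M)_i$ directly from (i) and Lemmas \ref{L:1c}, \ref{L:1d}(ii): primes of maximal possible dimension $i$ in $\Var(\mathfrak{a}_i(M))$ coincide with the top-dimensional part of $\Psupp^i_R(M)=(\Ass_R M)_i$. Next I would identify the top-dimensional attached primes with the top-dimensional minimal primes of $\Ann_R H^i_{\mathfrak{m}}(M)$ via Lemma \ref{L:1a}; since by (i) primes of dimension $i$ in $\Var(\mathfrak{a}_i(M))$ are automatically minimal there, this yields $(\Att_R H^i_{\mathfrak{m}}(M))_i=(\Var(\mathfrak{a}_i(M)))_i$.

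For (iii), Proposition \ref{P:1d} already gives the chain
\[
\psd^i(M)\le \psd^i(\widehat M)=\dim(\widehat R/\Ann_{\widehat R}H^i_{\mathfrak{m}}(M))\le \dim(R/\mathfrak{a}_i(M)).
\]
Using Lemma \ref{L:1c}, the leftmost quantity equals $\max\{\dim(R/\mathfrak{p}):\mathfrak{p}\in\Var(\mathfrak{a}_i(M))\}=\dim(R/\mathfrak{a}_i(M))$, collapsing the chain to equalities. Part (iv) is then immediate: by Lemma \ref{L:1c} each $\Psupp^i_R(M)=\Var(\mathfrak{a}_i(M))$ is closed, so Corollary \ref{C:1} applies.

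For (v), note that universal catenarity of $R/\Ann_R M$ implies catenarity, so since $M$ is equidimensional, Corollary \ref{C:2a} gives $\nCM(M)=\bigcup_{i=0}^{d-1}\Psupp^i_R(M)$. Applying Lemma \ref{L:1c} termwise and using $\Var(\mathfrak{a}_0(M)\cdots\mathfrak{a}_{d-1}(M))=\bigcup_{i=0}^{d-1}\Var(\mathfrak{a}_i(M))$ converts the right-hand side into $\Var(\mathfrak{a}(M))$, finishing the proof. Overall there is no genuine obstacle here: everything is an assembly of Lemmas \ref{L:1a}, \ref{L:1d}, \ref{L:1c}, Proposition \ref{P:1d}, and Corollaries \ref{C:2a}, \ref{C:1}; the mild subtlety is checking in (ii) that top-dimensional points of $\Var(\mathfrak{a}_i(M))$ are forced to be minimal, which is exactly what (i) provides.
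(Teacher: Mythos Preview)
Your proposal is correct and follows exactly the approach the paper intends: the paper states the corollary immediately after Lemma~\ref{L:1c} with only the remark that it follows from that lemma together with the facts in Sections~2 and~3, and your argument is precisely a careful unpacking of this---combining Lemma~\ref{L:1c} with Lemma~\ref{L:1d}, Lemma~\ref{L:1a}, Proposition~\ref{P:1d}, and Corollaries~\ref{C:1} and~\ref{C:2a} in the natural way.
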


Following M. Nagata \cite{Na}, we say that  $M$ is {\it quasi unmixed} if $\widehat M$ is equidimensional, i.e. $\dim (\R/\widehat\p)=d$ for all $\widehat\p\in\min\Ass_{\R}\widehat M.$  We say that $M$ is {\it unmixed} if $\dim (\R/\widehat\p)=d$ for all $\widehat\p\in\Ass_{\R}\widehat M.$ 

\begin{theorem}\label{T:2} Set $\displaystyle T(M)=\bigcup_{0\leqslant i<j\leqslant d}\Var (\frak a_i(M)+\frak a_j(M)).$ The following statements are true.

(i)  If the ring $R/\Ann_RM$ is universally catenary and all its formal fibres are Cohen-Macaulay then $\nCM (M)=T(M).$ 

(ii) If $\nCM (M)=T(M)$ then the ring $R/\Ann_RM$ is universally catenary and $R/\p$ is unmixed for all $\p\in\min\Ass_R M.$
\end{theorem}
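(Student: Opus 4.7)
Part (i) should fall out immediately: the hypothesis coincides with that of Lemma~\ref{L:1c}, which upgrades the inclusion $\Psupp^i_R(M)\subseteq\Var(\frak a_i(M))$ of Lemma~\ref{L:1b} to equality for every $i$. Substituting this into the formula $\nCM(M)=\bigcup_{0\leqslant i<j\leqslant d}\bigl(\Psupp^i_R(M)\cap\Psupp^j_R(M)\bigr)$ of Theorem~\ref{T:1a}(ii) and using $\Var(I)\cap\Var(J)=\Var(I+J)$ will give $\nCM(M)=T(M)$ with no further work.

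For (ii) the plan is to argue by contradiction. Note first that $\nCM(M)\subseteq T(M)$ is automatic from Lemma~\ref{L:1b} and Theorem~\ref{T:1a}(ii), so the hypothesis $\nCM(M)=T(M)$ is precisely the assertion that no prime of $T(M)$ escapes $\nCM(M)$; the goal is to exhibit such a renegade prime whenever some $\p\in\min\Ass_R M$ has $R/\p$ not unmixed. Accordingly, suppose $\p\in\min\Ass_R M$ and fix $\widehat\p\in\Ass_{\R}(\R/\p\R)$ with $t:=\dim(\R/\widehat\p)<d_{\p}:=\dim(R/\p)$; then $\widehat\p\cap R=\p$ by faithful flatness.

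Two facts about $\p$ must be extracted. First, because $\p\in\Ass_R M$, the formula $\Ass_{\R}\widehat M=\bigcup_{\q\in\Ass_R M}\Ass_{\R}(\R/\q\R)$ from \cite[Theorem 23.2(ii)]{Mat} places $\widehat\p$ in $\Ass_{\R}\widehat M$, and \cite[11.3.3]{BS} (the same tool used in the proof of Corollary~\ref{C:3}) upgrades this to $\widehat\p\in\Att_{\R}H^t_{\m}(M)$; contracting to $R$ via \cite[8.2.4, 8.2.5]{BS} then yields $\p\in\Att_R H^t_{\m}(M)\subseteq\Var(\frak a_t(M))$. Second, because $\p$ is minimal in $\Supp_R M$, the module $M_{\p}$ is nonzero and Artinian of dimension and depth zero, so $H^0_{\p R_{\p}}(M_{\p})\neq 0$ gives $\p\in\Psupp^{d_{\p}}_R(M)\subseteq\Var(\frak a_{d_{\p}}(M))$ by Lemma~\ref{L:1b}. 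Since $0\leqslant t<d_{\p}\leqslant d$, we conclude $\p\in\Var(\frak a_t(M)+\frak a_{d_{\p}}(M))\subseteq T(M)$, whereas $M_{\p}$ being Cohen-Macaulay gives $\p\notin\nCM(M)$, contradicting $T(M)=\nCM(M)$.

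Hence $R/\p$ is unmixed for every $\p\in\min\Ass_R M$. Unmixedness of $R/\p$ implies quasi-unmixedness, and Ratliff's theorem characterizes universal catenarity of $R/\Ann_R M$ by quasi-unmixedness of $R/\p$ at every minimal prime of $\Ann_R M$, giving the second conclusion. The main obstacle I foresee is the step $\widehat\p\in\Att_{\R}H^t_{\m}(M)$: being an associated prime of $\widehat M$ of dimension $t$ does not \emph{a priori} entail attachment to the $t$-th local cohomology, and I must lean on \cite[11.3.3]{BS} precisely to bridge that gap; once it is in hand, the rest of (ii) is essentially bookkeeping.
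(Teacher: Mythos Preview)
Your proposal is correct and matches the paper's proof essentially step for step. The only cosmetic differences are that the paper obtains $\frak a_{d_{\p}}(M)\subseteq\p$ by applying \cite[11.3.9]{BS} directly over $R$ (rather than going through $\Psupp^{d_{\p}}_R(M)$ and Lemma~\ref{L:1b}), and it deduces universal catenarity by verifying quasi-unmixedness of $R/\p$ at \emph{all} primes $\p\supseteq\Ann_RM$ via \cite[Theorems 31.6, 31.7]{Mat}, which amounts to the same Ratliff-type reduction you invoke at the minimal primes.
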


\begin{proof}  (i) follows by  Theorem \ref{T:1a}(ii) and Lemma \ref{L:1c}.  

(ii). Let $\p\in\min\Ass_RM$.  Set $\dim (R/\p)=t.$ Assume that  $R/\p $ is not unmixed. Then there exists  $\widehat \p\in\Ass (\widehat R/\p\widehat R)$ such that $\dim (\R/\widehat\p) =k<t.$  Note that $k<t\leqslant d$. We have by [Mat, Theorem  23.2(ii)] that $$\Ass\widehat M=\bigcup_{\q\in\Ass M}\Ass (\R/\q\R ).$$ Therefore  $\widehat \p\in\Ass \widehat M.$ Since $\dim (\R/\widehat\p)=k,$ we get by [BS, 11.3.9] that $\widehat\p\in\Att_{\R}(H^k_{\m\R}(\widehat M))$. Note that $H^k_{\m\R}(\widehat M)\cong H^k_{\m}(M)$ as $\R$-modules. So, $\widehat\p\in\Att_{\R}(H^k_{\m}(M))$. By \cite[8.2.4, 8.2.5]{BS},  $\p=\widehat\p\cap R\in\Att_R(H^k_{\m}(M))$. So $\frak a_k(M)\subseteq \p$ by Lemma \ref{L:1a}. Moreover, since $\dim (R/\p)=t$ and $\p\in\Ass M,$ it follows by [BS, 11.3.9] that $\p\in\Att_R(H^t_{\m}(M)).$ Hence $\frak a_t(M)\subseteq \p$ by Lemma \ref{L:1a}. Therefore we have $\p\in\Var (\frak a_k(M)+\frak a_t(M))$, where $k<t\leqslant d.$ So $\p\in T(M)=\nCM (M)$ by the hypothesis.  Since $\p\in\min\Ass_RM$, it follows that $M_{\p}$ is of finite length, and therefore $M_{\p}$ is Cohen-Macaulay. This is a contradiction. Thus $R/\p$ is unmixed for all $\p\in\min\Ass_RM.$

 Finally,  we show that $R/\Ann_R M$ is universally catenary. By [Mat, Theorem 31.7, (1)$\Leftrightarrow$(2)],  it is sufficient to show that $R/\p$ is quasi unmixed for all prime ideal $\p$ of $R$ containing $\Ann_RM$. Let $\p$ be a prime ideal containing $\Ann_RM.$ Then there exists $\q\in\min\Ass_RM$ such that $\q\subseteq \p .$ By the above  fact that we have just proved,  the domain $R/\q$ is unmixed. Therefore,  it follows by [Mat, Theorem 31.6,(ii)] that $R/\p$ is quasi unmixed. 
\end{proof}
 
 Let $\p\in\Spec (R).$ M. Nagata \cite{Na1} asked  whether $R/\p$ is unmixed  provided $R$ is  unmixed. However, M. Brodmann and C. Rotthaus \cite{BR} gave a counterexample to the question by Nagata. Therefore it is natural to ask   under which condition, $R/\p$ is unmixed.  Before giving a partial answer  to this question, we need the following result concerning to  the Serre conditions on $M$.  Let $r>0$ be an integer.  $M$ satisfies the {\it  Serre condition $(S_r)$} if 
$$\depth (M_{\p})\geq \min\{r,\dim (M_{\p})\}\ \text{for all}\ \p\in\Supp_R(M).$$ 

\begin{lemma} \label{L:5} Let $r\geq 0$ be an integer.  Assume that $M$ is equidimensional and the ring $R/\Ann_RM$ is catenary. Then $M$ satisfies  $(S_r)$ if and only if $\psd^i(M)\leqslant i-r$ for all $i<d.$ In particular, if $M$ satisfies  $(S_r)$ then $\dim (R/\p)\leqslant d-r-1$ for all $\p\in\nCM(M).$
\end{lemma}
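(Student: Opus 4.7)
The plan is to reduce everything to Theorem \ref{T:1a}(i), which gives the depth and the dimension of $M_{\p}$ in terms of $k=\min\{i\le d:\p\in\Psupp^i_R(M)\}$ and $t=\max\{i\le d:\p\in\Psupp^i_R(M)\}$. The first thing I would establish is that, under our standing hypotheses, $t=d$ for every $\p\in\Supp_R(M)$. Indeed, since $M$ is equidimensional and $R/\Ann_RM$ is catenary, a chain argument against a minimal prime $\q$ of $\Ann_RM$ with $\q\subseteq\p$ and $\dim(R/\q)=d$ gives the standard catenary formula $\dim M_{\p}+\dim(R/\p)=d$; by Theorem \ref{T:1a}(i) this forces $t=d$, so in fact $\p\in\Psupp^d_R(M)$ and $\dim M_{\p}=d-\dim(R/\p)$ for every $\p\in\Supp_R(M)$.

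For the implication $(S_r)\Rightarrow \psd^i(M)\le i-r$ for all $i<d$, I would fix $i<d$ and $\p\in\Psupp^i_R(M)$. Setting $k=\min\{j:\p\in\Psupp^j_R(M)\}$, we have $k\le i<d=t$, so Theorem \ref{T:1a}(i) yields $\depth M_{\p}=k-\dim(R/\p)<d-\dim(R/\p)=\dim M_{\p}$; hence $M_{\p}$ is not Cohen-Macaulay. Applying $(S_r)$ (and using that $M_{\p}$ not being Cohen-Macaulay rules out $\dim M_{\p}\le r$) gives $\depth M_{\p}\ge r$, i.e.\ $\dim(R/\p)\le k-r\le i-r$, so $\psd^i(M)\le i-r$. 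Conversely, suppose $\psd^i(M)\le i-r$ for all $i<d$, and take $\p\in\Supp_R(M)$ with $k$ as above. If $k=d$, then $k=t=d$, so $\depth M_{\p}=\dim M_{\p}$ and $(S_r)$ holds at $\p$ trivially. If $k<d$, then $\p\in\Psupp^k_R(M)$ forces $\dim(R/\p)\le \psd^k(M)\le k-r$, whence $\depth M_{\p}=k-\dim(R/\p)\ge r$, so again $(S_r)$ holds at $\p$.

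The ``In particular'' statement then falls out of Theorem \ref{T:1a}(ii): a prime $\p\in\nCM(M)$ lies in $\Psupp^i_R(M)\cap \Psupp^j_R(M)$ for some $0\le i<j\le d$, so in particular $\p\in\Psupp^i_R(M)$ with $i\le d-1$, and the already proved bound gives $\dim(R/\p)\le \psd^i(M)\le i-r\le d-r-1$. The only conceptual step is the identification $t=d$; once that is in place, both directions of the equivalence are a matter of inserting the inequalities $k\le i<d=t$ or $k<d=t$ into Theorem \ref{T:1a}(i), so I do not expect any serious obstacle beyond recognizing that equidimensionality together with catenarity of $R/\Ann_RM$ is precisely what forces $t=d$ uniformly over $\Supp_R(M)$.
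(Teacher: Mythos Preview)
Your proof is correct and follows essentially the same route as the paper's: both arguments hinge on Theorem~\ref{T:1a}(i) together with the fact that equidimensionality and catenarity of $R/\Ann_RM$ force $\dim M_{\p}+\dim(R/\p)=d$ (your $t=d$), and the backward direction is virtually identical. The only cosmetic difference is that the paper runs the forward implication by contradiction (assuming $\psd^n(M)>n-r$ and deriving $\depth M_{\p}+\dim(R/\p)=d$ from $(S_r)$), whereas you argue directly for each $\p\in\Psupp^i_R(M)$; the paper also cites Corollary~\ref{C:2a} in place of your explicit $t=d$ observation.
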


\begin{proof} Assume that $M$ satisfies the Serre condition  $(S_r)$.   Suppose that  there exists an integer $n<d$ such that $\psd^n(M)> n-r$. Let $\p\in\Psupp^n_R(M)$ such that $\dim (R/\p )=\psd^n(M).$ Then  $\depth (M_{\p})+\dim (R/\p)\leqslant n<d$ by Theorem \ref{T:1a}(iii). Therefore we get by Theorem \ref{T:1a}(i) that 
 $$\depth (M_{\p})\leqslant n-\dim (R/\p)=n-\psd^n(M)<n-(n-r)=r.$$ As $M$ satisfies the Serre condition  $(S_r)$, it follows that $\depth (M_{\p})=\dim (M_{\p}).$ Since $M$ is equidimensional and the ring $R/\Ann_RM$ is catenary, we have $$\depth (M_{\p})+\dim (R/\p )=\dim (M_{\p})+\dim (R/\p )=d,$$ this is a contradiction. 

Assume that  $\psd^i(M)\leqslant i-r$ for all $i<d.$ Let $\p\in\Supp_R(M).$ If $M_{\p}$ is Cohen-Macaulay then there is nothing to do.  So assume that $\p\in\nCM (M).$ Then we have by Corollary \ref{C:2a} that $\displaystyle \p\in\bigcup_{i=0}^{d-1}\Psupp^i_R(M).$  Set $k=\min\{i\mid \p\in\Psupp^i_R(M)\}$, then $k<d$ and $\p\in\Psupp^k_R(M).$   By the hypothesis, $\dim (R/\p )\leqslant \psd^k(M)\leqslant k-r.$ So, by Theorem \ref{T:1a}(i), $$\depth (M_{\p})=k-\dim (R/\p )\geq k-(k-r)=r.$$
\end{proof}

It is known that if $R$ is a quotient of a Gorenstein ring then $M$ satisfies the Serre condition  $(S_r)$ if and only if $\dim (R/\frak a_i(M))\leqslant i-r$ for all $i<d,$ cf. \cite[Lemma 3.2.1]{Sch}.  Here, Lemma  \ref {L:1c} and Lemma \ref{L:5} show that  this result is still true when  $R/\Ann_RM$ is universally catenary and all its formal fibres are Cohen-Macaulay. 

\begin{theorem} \label{T:3} Let $r\geq 1$ be an integer.  Suppose $M$ is equidimensional and $M$ satisfies the  Serre condition  $(S_r)$. If $\nCM(M)=\Var (\frak a(M))$ then $R/\p$ is unmixed for all $\p\in\Supp_R(M)$ with $\dim (R/\p)\geq d-r.$
\end{theorem}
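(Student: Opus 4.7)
I would argue by contradiction, assuming $\mathfrak p\in\Supp_R(M)$ has $t:=\dim(R/\mathfrak p)\geq d-r$ yet $R/\mathfrak p$ is not unmixed, and then producing an index $i<d$ for which simultaneously $\mathfrak a_i(M)\subseteq\mathfrak p$ and $\mathfrak a_i(M)\not\subseteq\mathfrak p$.

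\emph{Upgrading the hypothesis.} Combining Lemma~\ref{L:1b} and Theorem~\ref{T:1a}(ii) gives $\nCM(M)\subseteq T(M)\subseteq\Var(\mathfrak a(M))$, the second inclusion because $\mathfrak a(M)\subseteq\mathfrak a_i(M)$ for every $i<d$. The equality $\nCM(M)=\Var(\mathfrak a(M))$ thus forces $\nCM(M)=T(M)$, so Theorem~\ref{T:2}(ii) yields universal catenarity of $R/\Ann_RM$; by \cite[Theorem~31.7]{Mat}, $R/\mathfrak p'$ is then quasi unmixed for every $\mathfrak p'\in\Supp_R(M)$. Since $R/\Ann_RM$ is catenary and $M$ equidimensional, $\dim(M_{\mathfrak p})=d-t\leq r$, so the Serre condition $(S_r)$ forces $M_{\mathfrak p}$ to be Cohen-Macaulay with $\depth(M_{\mathfrak p})=d-t$; hence $\mathfrak p\notin\nCM(M)=\Var(\mathfrak a(M))$, giving $\mathfrak a_i(M)\not\subseteq\mathfrak p$ for every $i<d$. $(\star)$

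\emph{Producing the opposing inclusion.} Because $R/\mathfrak p$ is quasi unmixed but not unmixed, every minimal prime of $\widehat R/\mathfrak p\widehat R$ has dimension $t$, while there exists an embedded $\widehat{\mathfrak p}'\in\Ass_{\widehat R}(\widehat R/\mathfrak p\widehat R)$ with $k:=\dim(\widehat R/\widehat{\mathfrak p}')<t$. For the fibre $F:=(\widehat R/\mathfrak p\widehat R)_{\widehat{\mathfrak p}'}$ we have $\depth F=0$ (since $\widehat{\mathfrak p}'\in\Ass(\widehat R/\mathfrak p\widehat R)$) and $\dim F=t-k>0$ (since $\widehat R/\mathfrak p\widehat R$ is catenary and equidimensional of dimension $t$). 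Standard flat-base-change formulas then give
\[
\depth(\widehat M_{\widehat{\mathfrak p}'})=\depth(M_{\mathfrak p})+\depth F=d-t,\qquad \dim(\widehat M_{\widehat{\mathfrak p}'})=\dim(M_{\mathfrak p})+\dim F=d-k,
\]
so $\widehat M_{\widehat{\mathfrak p}'}$ is not Cohen-Macaulay; i.e.\ $\widehat{\mathfrak p}'\in\nCM(\widehat M)$. Because $\widehat R$ is complete, $\widehat R/\Ann_{\widehat R}\widehat M$ is universally catenary with Cohen-Macaulay formal fibres, so Lemma~\ref{L:1c} together with Theorem~\ref{T:1a}(ii) applied to $\widehat M$ gives
\[
\nCM(\widehat M)=\bigcup_{0\leq i<j\leq d}\Var\bigl(\mathfrak a_i(\widehat M)+\mathfrak a_j(\widehat M)\bigr).
\]
Thus $\widehat{\mathfrak p}'\supseteq\mathfrak a_i(\widehat M)$ for some $i<d$, and contracting via $\mathfrak a_i(\widehat M)\cap R=\mathfrak a_i(M)$ yields $\mathfrak a_i(M)\subseteq\mathfrak p$ with $i<d$, contradicting $(\star)$.

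The main conceptual step is to notice that the hypothesis secretly encodes universal catenarity: the sandwich $\nCM(M)\subseteq T(M)\subseteq\Var(\mathfrak a(M))$ converts the assumption $\nCM(M)=\Var(\mathfrak a(M))$ into $\nCM(M)=T(M)$, which via Theorem~\ref{T:2}(ii) delivers universal catenarity of $R/\Ann_RM$ and with it the equidimensionality of $\widehat R/\mathfrak p\widehat R$ underlying the key identity $\dim F=t-k$.
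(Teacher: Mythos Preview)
Your proof is correct and takes a genuinely different route from the paper's.

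The paper proceeds by induction on $r$: the base case $r=1$ treats $\mathfrak p$ of codimension $0$ via Theorem~\ref{T:2}(ii) and of codimension $1$ by choosing an $M$-regular element $x\in\mathfrak p$ and using the long exact cohomology sequence $0\to H^k_{\mathfrak m}(M)/xH^k_{\mathfrak m}(M)\to H^k_{\mathfrak m}(M/xM)\to 0:_{H^{k+1}_{\mathfrak m}(M)}x\to 0$ to push an attached prime of $H^k_{\mathfrak m}(M/xM)$ into $\Var(\mathfrak a_k(M))\cup\Var(\mathfrak a_{k+1}(M))$. The inductive step replaces $M$ by $M/xM$, checks that $M/xM$ is equidimensional of dimension $d-1$, satisfies $(S_{r-1})$, and again has $\nCM(M/xM)=\Var(\mathfrak a(M/xM))$; then invokes the inductive hypothesis.

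Your argument avoids induction altogether by passing to the completion: once universal catenarity is in hand (via the same sandwich $\nCM(M)\subseteq T(M)\subseteq\Var(\mathfrak a(M))$ and Theorem~\ref{T:2}(ii)), you exploit that $\widehat R$ is a quotient of a regular ring, so Theorem~\ref{T:2}(i) applies to $\widehat M$ unconditionally. The computation of $\depth(\widehat M_{\widehat{\mathfrak p}'})$ and $\dim(\widehat M_{\widehat{\mathfrak p}'})$ via the flat base-change formulas along $R_{\mathfrak p}\to\widehat R_{\widehat{\mathfrak p}'}$ replaces the paper's inductive descent, and the contraction $\mathfrak a_i(\widehat M)\cap R=\mathfrak a_i(M)$ (which holds because $H^i_{\mathfrak m}(M)\cong H^i_{\mathfrak m\widehat R}(\widehat M)$ as $\widehat R$-modules) brings the contradiction back to $R$. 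Your approach is shorter and more conceptual; the paper's has the virtue of staying over $R$ throughout and of showing explicitly how the hypotheses transfer to $M/xM$, which is of independent interest.
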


\begin{proof} We prove by induction on $r$.  Let $r=1.$    Set $\displaystyle T(M)=\bigcup_{0\leqslant i<j\leqslant d}\Var \big(\frak a_i(M)+\frak a_j(M)\big).$
We have by Theorem \ref{T:1a}(ii) and Lemma \ref{L:1b} that
$$\nCM(M)=\bigcup_{0\leqslant i<j\leqslant d}\big(\Psupp^i_R(M)\cap \Psupp^j_R(M)\big)\subseteq T(M)\subseteq \Var (\frak a(M)).$$ As $\nCM(M)=\Var (\frak a(M))$ by the hypothesis,  $\displaystyle \nCM(M)=T(M).$ By Theorem \ref{T:2}(ii) , the ring $R/\Ann_RM$ is universally  catenary. Since $M$ is equidimensional and $M$ satisfies the  Serre condition $(S_1)$, it follows by Lemma \ref{L:5} that $\dim (R/\q)\leqslant  d-2$ for all $\q\in\nCM(M).$ 
Let $\p\in\Supp_R(M)$ such that $\dim (R/\p)\geq d-1.$ If $\dim (R/\p)=d$ then $\p\in\min\Ass_R(M)$ and hence $R/\p$ is unmixed by Theorem \ref{T:2}(ii).  Let $\dim (R/\p)=d-1.$ As $M$ satisfies the Serre condition  $(S_1)$, it follows that $\Ass_RM=\min\Ass_RM.$ Hence $\dim (R/\q)=d$ for all $\q\in\Ass_RM.$ So, there exists  $x\in \p$ such that $x$ is $M$-regular. Assume that $R/\p$ is not unmixed. Then there exists $\widehat\p\in\Ass (\R/\p\R)$ such that $\dim (\R/\widehat\p)=k<d-1.$ Since $x\in \p$ and  $\dim (R/\p)=\dim (M/xM)$, we get $\p\in\min(\Ass_R(M/xM))$.  By [Mat, Theorem 23.2,(ii)], $$\Ass_{\R}(\widehat M/x\widehat M)=\bigcup_{\q\in\Ass_R(M/xM)}\Ass (\R/\q\R).$$ Therefore $\widehat\p\in\Ass_{\R}(\widehat M/x\widehat M).$ From the exact sequence $0\lr M\overset{x}{\lr}M\lr M/xM\lr 0$, we get the exact sequence
$$\ \ \ \ \ 0\lr H^k_{\m}(M)/xH^k_{\m}(M)\lr H^k_{\m}(M/xM)\lr 0:_{H^{k+1}_{\m}(M)}x\lr 0.\ \ \ \ \ (1)$$
   As $\dim (\R/\widehat\p)=k,$ we have  by Corollary \ref{C:6}(ii) that $\widehat\p\in\Att_{\R}(H^k_{\m\R}(\widehat M/x\widehat M))$. By Lemma \ref{L:1a} we have $\widehat\p\in \Var (\Ann_{\R} H^k_{\m}(\widehat M/x\widehat M))$. Hence $\p=\widehat\p\cap R\in\Var (\Ann_RH^k_{\m}(M/xM)).$ Therefore,  from  the exact sequence (1) we have $\p\in\Var (\frak a_k(M))\cup \Var (\frak a_{k+1}(M)).$ Since $k<d-1,$ we get $\p\in\Var (\frak a(M)).$ So, $\p\in\nCM(M)$ by our hypothesis.  This is a contradiction since $\dim (R/\p)=d-1.$ Thus, the result is true for $r=1.$

  Let $r>1$ and assume that the result is true for all finitely generated equidimensional $R$-modules $L$ satisfying the Serre condition  $(S_{r-1})$ such that $\nCM(L)=\Var (\frak a(L)).$ Let $\p\in\Supp_R(M)$ such that $\dim (R/\p)\geq d-r.$ If $\dim (R/\p)=d$ then $R/\p$ is unmixed by Theorem \ref{T:2}(ii). Assume that $\dim (R/\p)<d.$
As $\dim (R/\q)=d$ for all $\q\in\Ass_R(M),$ it follows that $\p\not\subseteq\q$ for all $\q\in\Ass_R(M).$ Therefore there exists an $M$-regular element $x\in \p.$  Let $\q\in\Supp_R(M/xM).$ As $M$ satisfies the Serre condition  $(S_r)$ and $x$ is an $M_{\q}$-regular element, it follows that 
$$\depth (M/xM)_{\q}=\depth (M_{\q})-1\geq\min\{\dim (M/xM)_{\q}, r-1\}.$$
 Therefore $M/xM$ satisfies the Serre condition   $(S_{r-1}).$  Let  $\q\in\min\Ass_R(M/xM).$ Then $\depth (M_{\q})=1.$ Note that  $M$ satisfies the Serre condition  $(S_2)$ since $r>1$.  Therefore $\dim (M_{\q})=1.$  Since $M$ is equidimensional and $R/\Ann_RM$ is catenary, it follows that $\dim (R/\q)=d-1.$ Thus, $M/xM$ is equidimensional.  By Theorem \ref{T:1a}(iv) and Lemma \ref{L:1b},  $\nCM(M/xM)\subseteq \Var (\frak a(M/xM)).$ Let $\q\in \Var (\frak a(M/xM)).$ Then $\q\in\Var (\frak a_k(M/xM))$ for some $k<d-1.$ Therefore we get from the exact sequence (1) that $\q\in\Var (\frak a_k(M))$ for some $k<d.$ Hence $\q\in\Var (\frak a(M)).$ So we have by hypothesis that $\q\in\nCM(M),$ i.e. $M_{\q}$ is not Cohen-Macaulay. Since $x$ is $M_{\q}$-regular, $M_{\q}/xM_{\q}$ is not Cohen-Macaulay. Therefore $(M/xM)_{\q}$ is not Cohen-Macaulay, i.e. $\q\in\nCM (M/xM).$ So, $$\nCM(M/xM)=\Var (\frak a(M/xM)).$$  Note that $\p\in\Supp_R(M/xM)$ and $$\dim (R/\p)\geq d-r=\dim (M/xM)-(r-1).$$ Now by induction applying to $M/xM$, the ring $R/\p$ is unmixed.
\end{proof}

Finally, we present an example to clarify some results in this paper. 
 
\begin{example} \label{E:1} Let $(R,\m)$ be the Noetherian local domain  of dimension $3$ constructed by M. Brodmann and C. Rotthaus \cite{BR}     such that $\R$ is a domain and $R/\p$ is not unmixed for some $\p\in\Spec (R).$  Let $\p$ be such a prime ideal. Then $\p\in\Var (\frak a(R))\setminus \nCM(R).$ 
\end{example}
\begin{proof}  It is easy to check that $\dim (R/\p)=2$ and there exists $\widehat\p\in\Ass (\R/\p\R)$ such that $\dim (\R/\widehat\p)=1.$    By the same arguments as in the proof of Theorem \ref{T:3}, it follows that  $\p\in\Var (\frak a_1(R))\cup \Var (\frak a_2(R)).$  Hence $\p\in\Var (\frak a(R)).$ As $\dim (R/\p)=2$, we have $\dim (R_{\p})=1=\depth (R_{\p}).$ Therefore $\p\notin\nCM(R).$
\end{proof}

Let $R$ be the Noetherian local domain of dimension $3$ as in Example \ref{E:1}.  Since $\R$ is a domain, $R$ universally catenary. Since $\Var (\frak a(R))\neq \nCM(R)$, it shows that the converse statement of Theorem \ref{T:2}(ii) is not true. Moreover,  $R$ is equidimensional and $R$ satisfies the Serre condition $(S_1),$ but $R/\p$ is not unmixed for some $\p$ of dimension $2.$ Therefore the hypothesis $\nCM(M)=\Var (\frak a(M))$ in Theorem 4.5 can not be removed.  

\medskip
\noindent{\bf Acknowledgment}. The authors thank for the referee's useful suggestions.

\end{document}